\newtheorem{theorem}{Theorem}
\newtheorem{corollary}{Corollary}
\newtheorem{lemma}{Lemma}
\theoremstyle{definition}
\newtheorem*{acknowledgement*}{Acknowledgement}
\begin{document}

\baselineskip=14.5pt
\title{On fractionally dense sets}

\author{ Jaitra Chattopadhyay}
\author{Bidisha Roy}
\author{Subha Sarkar}
\address[Jaitra Chattopadhyay, Bidisha Roy and Subha Sarkar]{Harish-Chandra Research Institute, HBNI, Jhunsi,
Allahabad, India}
\email{jaitrachattopadhyay@hri.res.in}
\email[Bidisha Roy]{bidisharoy@hri.res.in}
\email[Subha Sarkar]{subhasarkar@hri.res.in}

\begin{abstract}
    In this article, we prove some subsets of the set of natural numbers $\mathbb{N}$ and any non-zero ideals of an order of imaginary quadratic fields are fractionally dense in $\mathbb{R}_{>0}$ and $\mathbb{C}$ respectively.     \end{abstract}

     \keywords{imaginary quadratic fields, decimal expansions, dense sets}
     \subjclass{11B05, 11A41}
     
     \maketitle

\section{Introduction}
It is a basic fact that the set of all rational numbers $\mathbb{Q}$ is dense in the set of all real numbers $\mathbb{R}$.  First, we reformulate this fact in a different way. 

\smallskip

For any subset $A$ of the set of all integers $\mathbb{Z}$ (respectively, the set of all natural numbers $\mathbb{N}$), we define $R(A)$ to be the set of all rational numbers $\frac{a}{a'}$ such that both $a$ and $a'$ lie in $A$ and we call the subset $R(A)$ to be the {\it quotient set } of $A$. 
 
\smallskip
 
 If $A \subset \mathbb{Z}$ (respectively, $\mathbb{N}$) and $R(A)$ is dense in $\mathbb{R}$ (respectively, $\mathbb{R}_{>0}$), then we say $A$ is {\it fractionally dense}  in $\mathbb{R}$ (respectively, $\mathbb{R}_{>0}$).   In this formulation, for example, we  can say that $\mathbb{Z}$ is fractionally dense in $\mathbb{R}$.
 
\smallskip 

The major open problem is to characterize all the subsets of $\mathbb{Z}$ (respectively, $\mathbb{N}$) which are fractionally dense in $\mathbb{R}$ (respectively, $\mathbb{R}_{>0}$). In this direction, many results have already been obtained in  \cite{gems}, \cite{Erdos}, \cite{Toth-Salat}, \cite{Bukor-Toth}, \cite{Bukor-Toth 2} \cite{Dio}, \cite{light}, \cite{hs}, \cite{Salat}, \cite{Salat2} \cite{ps}, \cite{strauch}, \cite{Toth} and \cite{Toth2}. This problem has also been considered in the $p$-adic set-up in \cite{Luca1}, \cite{Luca2}, \cite{Sanna2} and \cite{Sanna}.

\smallskip

Indeed, the most interesting  set, namely,  the set of all prime numbers $\mathbb{P}$ is proved to be fractionally dense in $\mathbb{R}_{>0}$ in \cite{hs} and \cite{Salat}. In \cite{ps}, it is proved that  the set of all prime numbers in a given arithmetic progression is also fractionally dense in $\mathbb{R}_{>0}$. In this article, along with the other results, we generalize this fact.

\smallskip

In \cite{gems}, it is proved that for a given natural number $b\geq 2$, the set of all natural numbers whose base $b$ representation begins with the digit $1$ is fractionally dense if and only if $b = 2, 3, 4$. In the following theorem, we shall generalize this result. The second part along with its converse has been known and it is proved  in \cite{Toth}; however, our method is different and hence we include it here.

\begin{theorem}\label{Th1}
Let $b\geq 2$ be a given integer and let $a$ and $c$ be integers satisfying $1\leq a < c\leq  b$. Consider the subset 
$$
A = \bigcup_{k=0}^{\infty} [ab^k,cb^k) \cap \mathbb{N}
$$
of $\mathbb{N}.$   Then, the following statements are true:
\begin{enumerate}
\item If $ab < c^2$, then the set $B=A \cup \{b^k : k=0,1,2,\ldots\}$ is fractionally dense in $\mathbb{R}_{>0}$. 
\item If  $A$ is fractionally dense in $\mathbb{R}_{>0}$, then $a^2b \leq c^2$. 
\end{enumerate}
\end{theorem}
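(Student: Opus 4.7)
My plan is to reduce density of $R(B)$ in $\mathbb{R}_{>0}$ to density on the single bounded interval $[1,b]$, and then to cover $[1,b]$ using four natural families of ratios from $R(B)$, each dense in an explicit subinterval. Since $A$ and $\{b^k\}$ are each closed under multiplication by $b$, so is $B$, and hence $R(B)=b\cdot R(B)$; this scale invariance reduces the problem to proving density on $[1,b]$.

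\textbf{Building dense families.} For each large $k$, the slab $A\cap[ab^k,cb^k)$ is a block of about $(c-a)b^k$ consecutive integers, so ratios formed from $B$ become arbitrarily fine as $k\to\infty$. I would identify four natural families: ratios within a single slab are dense in $[a/c,c/a]$; ratios of a slab element to $b^k$ are dense in $[a,c)$; ratios of $b^{k+1}$ to a slab element are dense in $(b/c,b/a]$; and ratios between consecutive slabs of $A$ are dense in $(ab/c,cb/a)$. Intersecting with $[1,b]$ leaves dense subsets of $[1,c/a]$, $[a,c)$, $(b/c,b/a]$ and $(ab/c,b]$ respectively. The hypothesis $ab<c^2$ is equivalent simultaneously to $b/c<c/a$ and to $ab/c<c$: the first equivalence glues the first and third intervals into $[1,b/a]$, and the second glues the second and fourth into $[a,b]$.

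\textbf{Main obstacle.} The decisive step is then to show that $[1,b/a]\cup[a,b]$ exhausts $[1,b]$, i.e.\ to close the possible middle gap $(b/a,a)$. When $b\ge a^2$ the gap is empty and the argument terminates at once. When $b<a^2$ one must exploit additional ratios across wider level shifts $r\ge 2$ (for instance $b^{k+r}/n$ with $n$ in a slab at level $k$, together with their duals $m/b^k$ with $m$ in a slab at level $k+r$) and use the full strength of $ab<c^2$ to cascade density across scales into the middle region. I expect this interval-merging step to be the main technical obstacle of part~(1).

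\textbf{Part (2).} For the converse I would use a logarithmic contrapositive argument. Every element of $A$ lies in some band $[ab^k,cb^k)$, so $\log_b A\subset\bigcup_k[k+\log_b a,\,k+\log_b c)$ and hence $\log_b R(A)\subset\mathbb{Z}+(-\log_b(c/a),\log_b(c/a))$. If $a^2 b>c^2$ then $2\log_b(c/a)<1$, so the reduction of this set modulo $1$ omits a nonempty open arc of $\mathbb{R}/\mathbb{Z}$; any positive real $\alpha$ whose logarithm modulo $\log b$ falls into that arc is then bounded away from every element of $R(A)$, contradicting fractional density of $A$. Therefore $a^2 b\le c^2$, as claimed.
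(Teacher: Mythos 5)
Your Part~(2) argument is correct and is the paper's own argument cast logarithmically: the paper shows that $R(A)\subset\bigcup_{\ell\in\mathbb Z}\left(\tfrac{a}{c}b^\ell,\tfrac{c}{a}b^\ell\right]$ and that $c^2<a^2b$ makes these intervals pairwise disjoint, which is exactly your ``short arc misses a region of $\mathbb R/\mathbb Z$'' observation.

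For Part~(1), the obstacle you flag is not a technicality to be engineered around with wider level shifts --- it is fatal, and Theorem~\ref{Th1}(1) is in fact \emph{false} as stated. Your enumeration of ratio families is already exhaustive: when $a\ge 2$, the level shifts with $|r|\ge 2$ produce intervals lying entirely outside $[1,b]$, so $\overline{R(B)}\cap[1,b]$ equals $[1,b/a]\cup[a,b]$ on the nose; if in addition $b<a^2$, the nonempty open interval $(b/a,a)$ genuinely misses $R(B)$. Concretely, take $a=2$, $b=3$, $c=3$ (so $ab=6<9=c^2$ but $b=3<4=a^2$). Then $B=\{1\}\cup\bigcup_{k\ge 0}\bigl([2\cdot 3^k,\,3^{k+1}]\cap\mathbb N\bigr)$; any ratio of two elements from a single block $[2\cdot 3^k,3^{k+1}]$ lies in $[2/3,3/2]$, any ratio across adjacent blocks lies in $[2,9/2]$, and the isolated element $1$ contributes nothing new, so $R(B)\cap(3/2,2)=\emptyset$. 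The same obstruction falsifies Corollary~\ref{Cor11} for $a=b-1$, $b\ge 3$ and for $a=b-2$, $b\ge 5$.

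The paper's own proof hides this defect in its ``Case~2.'' Case~1, for $\xi\in[ab^k,cb^k)$, builds an approximant $n/b^j$ with $n\in A$ near $b^j\xi$, and is sound. Case~2, for $\xi\in[\tfrac{ab^k}{c},ab^k)$, is dispatched as ``similar,'' but the analogous construction requires $b^j\xi$ to fall into $B$, and for $\xi$ in the open interval $(b^{k+1}/a,\,ab^k)$ the number $b^j\xi\in(b^{j+k+1}/a,\,ab^{j+k})$ lies strictly between the power $b^{j+k}$ and the slab $[ab^{j+k},cb^{j+k})$, hence never meets $B$; and, as your analysis shows, no other quotient from $B$ reaches such $\xi$ either. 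Both your proof and the paper's close precisely when $b\ge a^2$ --- in particular in the original $a=1$ setting of \cite{gems} --- and that is the missing hypothesis under which Part~(1) is actually true.
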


When we put $a = 1$ (respectively, $a =1$ and $c = 2$) in Theorem \ref{Th1}, we recover the earlier results proved in \cite{gems}. Also, an easy corollary is as follows.

\begin{corollary}\label{Cor11}
Let $b\geq 2$ be a given integer and let $a$ be an integer satisfying $2\leq a\leq b-1$. Then the set of all integers whose base $b$ representation begins with the digit $a$ together with $b^k$ for all $k = 0, 1, \ldots$ is fractionally dense in $\mathbb{R}_{>0}$, if $a= b-1$ or $b-2$. 
\end{corollary}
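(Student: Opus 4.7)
The plan is to recognize Corollary \ref{Cor11} as a direct specialization of Theorem \ref{Th1}(1). First I would observe that an integer $n \geq 1$ has base-$b$ representation beginning with the digit $a$ precisely when $n$ lies in some interval $[ab^k, (a+1)b^k)$ for a unique $k \geq 0$. Therefore the set of integers whose leading digit (in base $b$) is $a$ is exactly
$$
A \;=\; \bigcup_{k=0}^{\infty}\bigl[ab^k,(a+1)b^k\bigr)\cap\mathbb{N},
$$
which is the set appearing in Theorem \ref{Th1} with the choice $c = a+1$. Note that the hypothesis $2 \le a \le b-1$ forces $1 \le a < c = a+1 \le b$, so the choice is legitimate.

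Next I would apply Theorem \ref{Th1}(1). The hypothesis of that part, namely $ab < c^2$, becomes
$$
ab \;<\; (a+1)^{2} \;=\; a^{2}+2a+1,
$$
equivalently $a(b-a-2) < 1$. Since $a \ge 1$, this is equivalent to $b - a - 2 \le 0$, i.e.\ $a \ge b-2$. Combined with $a \le b-1$ from the hypothesis, the admissible values are exactly $a = b-1$ and $a = b-2$, which matches the two cases listed in the corollary.

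Having verified the hypothesis of Theorem \ref{Th1}(1) in both cases, I would conclude that the set
$$
B \;=\; A \cup \{b^k : k = 0,1,2,\ldots\}
$$
is fractionally dense in $\mathbb{R}_{>0}$, which is exactly the statement of Corollary \ref{Cor11}. There is no genuine obstacle here: the only content is the short inequality check $a(b-a-2) < 1$, and everything else is bookkeeping to match the description ``base-$b$ expansion begins with the digit $a$'' with the union-of-intervals description used in Theorem \ref{Th1}.
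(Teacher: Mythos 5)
Your proof is correct and is precisely the specialization the paper has in mind: take $c = a+1$ in Theorem \ref{Th1}(1), and check that for integers $a$ with $2 \le a \le b-1$, the inequality $ab < (a+1)^2$ is equivalent to $a \ge b-2$, i.e.\ $a \in \{b-1, b-2\}$. The paper only states this is an ``easy corollary'' without spelling it out, and your argument supplies exactly the missing bookkeeping.
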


For $A \subset \mathbb{N}$ and for every $x >1$, we define $A(x)=\{a \in A : a \leq x\}$. We say that $A$ has a {\it natural density} $d(A)$, if  
$$
d(A)=\displaystyle \lim_{n \to \infty} \frac{|A(n)|}{n},
$$ 
provided the limit exists. A subset $A \subset \mathbb{N}$ is said to have {\it lower natural density} $\underline{d}(A)$, if 
$$
\underline{d}(A) = \liminf_{n\to\infty} \frac{|A(n)|}{n}.$$
In \cite{gems} and \cite{Toth}, it was proved that if a subset $A \subset {\mathbb{N}}$ satisfies $\underline{d}(A) \geq \frac{1}{2}$, then $A$ is fractionally dense in $\mathbb{R}_{>0}$. Some related questions have been addressed in \cite{Salat3}. In the following theorem, we consider those subsets $A$ which satisfy $d(A) >0$. Here we note that  Theorem \ref{Th2}   was first proved in \cite{Salat3} and our proof is different.

\begin{theorem}\label{Th2}
Let $U$ and $V$ be given subsets of $\mathbb{N}$  such that $d(U)$  exists and equals $\gamma > 0$. Then 
$\displaystyle \frac{U}{V}=\left\{\frac{u}{v} : u \in U, v \in V \right\}$ is dense in $\mathbb{R}_{>0}$ if and only if $V$ is infinite.
\end{theorem}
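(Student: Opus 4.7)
The plan is to handle the two directions of the biconditional separately, with the reverse implication being immediate and essentially all of the content lying in the forward direction.

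For the ``only if'' direction, I would argue by contrapositive. Suppose $V$ is finite, say $V=\{v_1,\dots,v_n\}$, and put $L=\mathrm{lcm}(v_1,\dots,v_n)$. Then every element of $U/V$ is of the form $u/v_i$ with $u\in\mathbb{N}$, hence lies in $\tfrac{1}{L}\mathbb{N}$. As $\tfrac{1}{L}\mathbb{N}$ is a closed discrete subset of $\mathbb{R}_{>0}$, the quotient $U/V$ cannot be dense. So if $U/V$ is dense then $V$ must be infinite.

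For the ``if'' direction, assume $V$ is infinite and fix an arbitrary target $\alpha>0$ together with a small $\varepsilon\in(0,\alpha)$. It suffices to produce $u\in U$ and $v\in V$ with $u/v\in(\alpha-\varepsilon,\alpha+\varepsilon)$, equivalently $u\in U\cap\bigl((\alpha-\varepsilon)v,(\alpha+\varepsilon)v\bigr)$ for some $v\in V$. I would then use the positive density hypothesis to count the elements of $U$ in such a window: since $|U(n)|=\gamma n+o(n)$, given any $\delta>0$ there exists $N_0$ such that $(\gamma-\delta)n\le|U(n)|\le(\gamma+\delta)n$ for all $n\ge N_0$. Applying this at $n=(\alpha+\varepsilon)v$ and at $n=(\alpha-\varepsilon)v$ and subtracting yields
\[
\bigl|U\cap\bigl((\alpha-\varepsilon)v,(\alpha+\varepsilon)v\bigr]\bigr|
\;\ge\;(\gamma-\delta)(\alpha+\varepsilon)v-(\gamma+\delta)(\alpha-\varepsilon)v
\;=\;2(\gamma\varepsilon-\delta\alpha)\,v.
\]
Choosing $\delta<\gamma\varepsilon/\alpha$ makes the coefficient of $v$ strictly positive, so this lower bound tends to infinity with $v$. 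Because $V$ is infinite I can pick $v\in V$ as large as needed (in particular with $(\alpha-\varepsilon)v\ge N_0$ and with the displayed quantity $\ge 1$), and any $u\in U$ in the resulting window satisfies $u/v\in(\alpha-\varepsilon,\alpha+\varepsilon)$. Since $\alpha$ and $\varepsilon$ were arbitrary, $U/V$ is dense in $\mathbb{R}_{>0}$.

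There is no serious obstacle in this argument; the only point requiring some care is ensuring that the asymptotic estimate for $|U(n)|$ is applied only once $(\alpha-\varepsilon)v$ exceeds the threshold $N_0$, which is automatic for sufficiently large $v\in V$. The key input is simply that positive density forces $U$ to meet every sufficiently long window of linear length, and the infinitude of $V$ provides windows of arbitrarily large length with the prescribed ratio of endpoints.
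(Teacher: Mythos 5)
Your proposal is correct and follows essentially the same route as the paper: for the forward direction the paper likewise uses the density asymptotic $U(X)=\gamma X+o(X)$ to show that $U$ must meet the window $(av,bv]$ once $v$ is large enough (the paper phrases this via $U(aX)/U(bX)\to a/b<1$, you phrase it via an explicit lower bound $2(\gamma\varepsilon-\delta\alpha)v$ on the count, but these are the same estimate), and then uses infinitude of $V$ to supply such a $v$. The reverse direction is also the same discreteness observation, only you make it explicit by embedding $U/V$ in $\tfrac{1}{L}\mathbb{N}$ via the least common multiple, whereas the paper simply notes that a finite union of discrete sets is discrete.
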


Note that if $A\subset \mathbb{N}$ with $d(A) >0$, then $A$ must be an infinite set. Here we give an alternative proof of the following corollary, which was first proved by \v{S}al\'{a}t in \cite{Salat} and again in \cite{light}, by taking $V=U$ in Theorem \ref{Th2}.

\begin{corollary}\label{cor-th2}
Let $U$ be a given subset of $\mathbb{N}$ such that $d(U)$ exists and is positive. Then $U$ is fractionally dense in $\mathbb{R}_{>0}$.
\end{corollary}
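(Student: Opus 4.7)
The plan is to deduce this immediately from Theorem \ref{Th2} by specializing to the choice $V = U$. The only thing to check in order to apply that theorem is that the set $V$ chosen is infinite, and with $V = U$ this reduces to verifying that $U$ itself is infinite.

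The first (and essentially only) step is to observe that a subset of $\mathbb{N}$ of positive natural density is necessarily infinite. Indeed, if $U$ were finite, then $|U(n)|$ would be bounded by the constant $|U|$ for all $n \geq 1$, and hence $|U(n)|/n \to 0$ as $n \to \infty$. This would force $d(U) = 0$, contradicting the hypothesis that $d(U) = \gamma > 0$.

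Having secured this, the second step is to invoke Theorem \ref{Th2} with $V := U$. The hypotheses of that theorem are met: $d(U)$ exists and is positive by assumption, and $V = U$ is infinite by the previous step. The conclusion therefore yields that
\[
\frac{U}{U} = \left\{\frac{u}{v} : u \in U,\; v \in U\right\}
\]
is dense in $\mathbb{R}_{>0}$. But this is precisely the quotient set $R(U)$ from the definition in the Introduction, so $R(U)$ is dense in $\mathbb{R}_{>0}$. By the definition of fractional density, $U$ is therefore fractionally dense in $\mathbb{R}_{>0}$.

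There is no genuine obstacle here; the corollary is a direct specialization of Theorem \ref{Th2}, and the only substantive remark is the trivial lemma that positive natural density implies infinitude, which handles the hypothesis on $V$ automatically.
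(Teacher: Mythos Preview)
Your proof is correct and follows exactly the approach indicated in the paper: the corollary is obtained by taking $V = U$ in Theorem \ref{Th2}, after noting (as the paper does just before stating the corollary) that a subset of $\mathbb{N}$ with positive natural density must be infinite.
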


Now, we define  the {\it relative density} of a subset $A$ of the set of all prime numbers $\mathbb{P}$ as follows. A subset $A$ of $\mathbb{P}$ has {\it relative density} $\delta(A)$, if
$$
\delta(A) = \lim_{x\to\infty}\frac{|A(x)\cap\mathbb{P}|}{\pi(x)},
$$
provided the limit exists. Here $\pi(x)$ denotes the number of prime numbers $p$ with $p\leq x$.  It readily follows from the definition that if $\delta(A) >0$, then $A$ must be an infinite subset of $\mathbb{P}$. 

\smallskip

Note that if $1\leq a < m$ are integers such that gcd$(a, m) = 1$, then the set $D(a,m)$ of all prime numbers $p$ with $p\equiv a\pmod{m}$ has relative density $\delta(D(a,m)) = 1/\phi(m)$, by {\it Dirichlet's Prime Number Theorem}. Motivated by many examples of subsets of $\mathbb{P}$, we have the following general theorem.

\begin{theorem}\label{Th3}
Let $U \subseteq \mathbb{P}$ and $V \subseteq \mathbb{N}$ be such that $\delta(U)$  exists and equals $\gamma > 0$. Then 
$\displaystyle \frac{U}{V}=\left\{\frac{u}{v} : u \in U, v \in V \right\}$ is dense in $\mathbb{R}_{>0}$ if and only if $V$ is infinite.
\end{theorem}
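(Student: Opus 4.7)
The plan is to mimic the strategy for Theorem~\ref{Th2}, but replace the natural density asymptotic $|U(y)|\sim \gamma y$ by the relative density asymptotic $|U(y)\cap\mathbb{P}|\sim \gamma\,\pi(y)\sim \gamma\, y/\log y$ coming from the Prime Number Theorem. The only-if direction is straightforward: if $V=\{v_1,\dots,v_k\}$ is finite and $L=\mathrm{lcm}(v_1,\dots,v_k)$, then $U/V\subseteq \tfrac{1}{L}\mathbb{Z}$, which is a discrete subset of $\mathbb{R}_{>0}$ and hence not dense. So all the work is in the if direction.

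For the if direction, I would fix $x>0$ and $\varepsilon\in(0,x)$ and look for $u\in U$, $v\in V$ with $|u/v-x|<\varepsilon$, equivalently $u\in I_v:=\bigl(v(x-\varepsilon),v(x+\varepsilon)\bigr)$. Since $V$ is infinite, it suffices to show that $U\cap I_v\neq\emptyset$ for all sufficiently large $v$; then choosing any $v\in V$ beyond this threshold produces the desired approximation.

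To estimate $|U\cap I_v|$, I would write
\[
|U\cap I_v|=|U(v(x+\varepsilon))\cap\mathbb{P}|-|U(v(x-\varepsilon))\cap\mathbb{P}|
=\bigl(\gamma+o(1)\bigr)\bigl(\pi(v(x+\varepsilon))-\pi(v(x-\varepsilon))\bigr),
\]
using $|U(y)\cap\mathbb{P}|=(\gamma+o(1))\pi(y)$ as $y\to\infty$. By the Prime Number Theorem, $\pi(y)\sim y/\log y$, and hence
\[
\pi(v(x+\varepsilon))-\pi(v(x-\varepsilon))\;\sim\;\frac{2v\varepsilon}{\log v}\qquad\text{as }v\to\infty,
\]
so $|U\cap I_v|\sim \dfrac{2\gamma v\varepsilon}{\log v}\to\infty$. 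In particular $|U\cap I_v|\ge 1$ for all $v$ large enough, which completes the proof once one invokes the infinitude of $V$.

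The main (and essentially only) delicate point is justifying the asymptotic for $\pi(v(x+\varepsilon))-\pi(v(x-\varepsilon))$; this requires combining PNT with the hypothesis that $\delta(U)$ exists, being careful that the $o(1)$ error from the relative density is absorbed into the leading $\gamma\cdot 2v\varepsilon/\log v$ term. Everything else is a direct translation of the proof of Theorem~\ref{Th2}, with the logarithmic factor accounting for the thinner distribution of primes compared to a set of positive natural density.
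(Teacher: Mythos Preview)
Your proposal is correct and follows essentially the same route as the paper: reduce the ``if'' direction to showing that for all sufficiently large $v$ the interval $(v(x-\varepsilon),v(x+\varepsilon))$ contains an element of $U$, then invoke the infinitude of $V$; and handle the ``only if'' direction by observing that $U/V$ is discrete when $V$ is finite.

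The only noteworthy difference is in how the interval claim is established. You subtract the two counts and appeal to PNT to get $|U\cap I_v|\sim 2\gamma\varepsilon v/\log v$; this works, but (as you yourself flag) one must check that the two separate $o(\pi(\cdot))$ errors, each of size $o(v/\log v)$, do not swamp the main term, which is also of order $v/\log v$. They don't, because $2\gamma\varepsilon$ is a fixed positive constant, but the displayed equality $(\gamma+o(1))\bigl(\pi(v(x+\varepsilon))-\pi(v(x-\varepsilon))\bigr)$ is not immediate from $U(y)=(\gamma+o(1))\pi(y)$ and deserves a line of justification. The paper avoids this bookkeeping entirely: instead of estimating the difference, it shows
\[
\lim_{X\to\infty}\frac{U(aX)}{U(bX)}=\frac{a}{b}<1,
\]
which only requires dividing the asymptotics rather than subtracting them, and then concludes $U(aX)<U(bX)$ for large $X$. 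Your approach yields the stronger quantitative statement $|U\cap I_v|\to\infty$; the paper's ratio trick is slicker but gives only existence. Either way the theorem follows.
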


By taking $V =U$ in Theorem \ref{Th3},  we have the following corollary.

\begin{corollary}\label{cor-th3}
Let $U$ be a given subset of $\mathbb{P}$ such that $\delta(U)$ exists and equals $\gamma >0$. Then $U$ is fractionally dense in $\mathbb{R}_{>0}$. 
\end{corollary}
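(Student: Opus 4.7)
The plan is to deduce Corollary \ref{cor-th3} directly from Theorem \ref{Th3} by specializing to the case $V = U$. All the substantive work has been done in Theorem \ref{Th3}, so the task here is simply to verify that the hypotheses required for that theorem are satisfied and to translate its conclusion into the language of fractional density.

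First I would check that the numerator condition is met: by assumption $U \subseteq \mathbb{P}$ and the relative density $\delta(U)$ exists and equals $\gamma > 0$, which is exactly what Theorem \ref{Th3} demands of its first input. Next I would choose the denominator set $V := U$; since $\mathbb{P} \subseteq \mathbb{N}$, this $V$ is a legitimate subset of $\mathbb{N}$. Finally I would verify that $V$ is infinite: this is precisely the observation already recorded in the paragraph preceding Theorem \ref{Th3}, namely that $\delta(U) > 0$ forces $U$ to be an infinite subset of $\mathbb{P}$.

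With all three hypotheses in place, Theorem \ref{Th3} yields that
\[
\frac{U}{V} \;=\; \left\{\frac{u}{v} : u, v \in U\right\} \;=\; R(U)
\]
is dense in $\mathbb{R}_{>0}$, and by the definition of fractional density introduced in the Introduction this is exactly the assertion that $U$ is fractionally dense in $\mathbb{R}_{>0}$.

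Since the corollary is obtained by a one-line application of the preceding theorem, there is no genuine obstacle to overcome; the only point that requires a moment's attention is the easy remark that positivity of the relative density automatically supplies the infinitude needed to invoke Theorem \ref{Th3} in the nontrivial direction.
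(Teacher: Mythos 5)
Your proposal is correct and is exactly the paper's own argument: the corollary is obtained by setting $V = U$ in Theorem \ref{Th3}, with the infinitude of $U$ guaranteed by $\delta(U) > 0$, as the paper itself notes just before stating Theorem \ref{Th3}. Nothing further is required.
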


The following theorem is first proved in \cite{ps}. This can be seen as a corollary to the Theorem \ref{Th3}. However, we give an alternative proof, using the distribution of prime numbers in some special intervals.

\begin{theorem}\label{Th7}
Let  $a,b,m$ and $n$ be given natural numbers  with $m\geq 2$ and $n\geq 2$ such that gcd$(a,m)$ = gcd$(b,n)=1$. Then, the set 
$$\left\lbrace \frac{p}{q} \quad : \quad  p,q \in \mathbb{P},\quad p \equiv a\pmod{m}, \quad q \equiv b\pmod{n} \right\rbrace $$ 
is dense in $\mathbb{R}_{>0}$.
\end{theorem}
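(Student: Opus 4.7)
The plan is to prove this directly by choosing the denominator prime first and then placing the numerator prime in a short interval around the desired target. Fix $\alpha>0$ and $\varepsilon>0$ with $\varepsilon<\alpha/2$. Our goal is to produce primes $p\equiv a\pmod m$ and $q\equiv b\pmod n$ with $p/q\in(\alpha-\varepsilon,\alpha+\varepsilon)$.

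First, I would invoke Dirichlet's theorem on primes in arithmetic progressions to assert that there are infinitely many primes $q\equiv b\pmod n$; in particular we may choose such a $q$ as large as we like. For each such $q$, the condition $p/q\in(\alpha-\varepsilon,\alpha+\varepsilon)$ is equivalent to $p$ lying in the interval
\[
I_q=\bigl((\alpha-\varepsilon)q,\,(\alpha+\varepsilon)q\bigr),
\]
which has length $2\varepsilon q$ and endpoints of order $q$. Thus the problem reduces to showing that for all sufficiently large $q$, the interval $I_q$ contains at least one prime congruent to $a$ modulo $m$.

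For this I would appeal to the Prime Number Theorem in arithmetic progressions in its quantitative form: for fixed $m$ and fixed $a$ with $\gcd(a,m)=1$,
\[
\pi(x;m,a):=\#\{p\leq x:\,p\text{ prime},\,p\equiv a\pmod m\}\sim\frac{1}{\phi(m)}\cdot\frac{x}{\log x}
\]
as $x\to\infty$. Applying this at $x=(\alpha+\varepsilon)q$ and $x=(\alpha-\varepsilon)q$ and subtracting gives
\[
\pi\bigl((\alpha+\varepsilon)q;m,a\bigr)-\pi\bigl((\alpha-\varepsilon)q;m,a\bigr)\sim\frac{2\varepsilon q}{\phi(m)\log q},
\]
which tends to infinity; in particular this count is positive for all $q$ large enough. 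Consequently, taking any sufficiently large prime $q\equiv b\pmod n$, there exists a prime $p\equiv a\pmod m$ in $I_q$, and then $p/q\in(\alpha-\varepsilon,\alpha+\varepsilon)$, as required.

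The only real subtlety is the use of PNT in arithmetic progressions for the short interval $[(\alpha-\varepsilon)q,(\alpha+\varepsilon)q]$; since the interval has length proportional to its endpoints and the modulus $m$ is fixed (independent of $q$), this is the classical form of the theorem and requires no uniformity in $m$, so no deep uniform result such as Siegel--Walfisz or Bombieri--Vinogradov is needed. Everything else is a direct translation between the density condition on $p/q$ and a short-interval prime count, so I expect this step to be the only substantive ingredient of the proof.
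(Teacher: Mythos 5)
Your argument is correct. You first choose a large prime $q\equiv b\pmod n$ (available by Dirichlet's theorem), which reduces the problem to producing a prime $p\equiv a\pmod m$ in the long interval $I_q=((\alpha-\varepsilon)q,(\alpha+\varepsilon)q)$; you then get such a $p$ by subtracting two instances of the Prime Number Theorem in arithmetic progressions, and the asymptotics work out because the two $o(x/\log x)$ error terms are dominated by the main term $2\varepsilon q/(\phi(m)\log q)$ when $\varepsilon>0$ and $m$ are fixed. The overall strategy — fix the denominator prime first and locate the numerator prime in the resulting dilated interval — is the same as the paper's. The difference is in how the interval $[cq,dq]$ is shown to contain a prime of $D(a,m)$. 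The paper isolates a separate tool, its Lemma~\ref{lem1}, which asserts that for any $\alpha>1$ every geometric interval $[\alpha^k,\alpha^{k+1}]$ with $k$ large enough meets $D(a,m)$; the proof of that lemma is a by-contradiction argument built around the quantity $L(x)=\log_\alpha G(x)$ from Theorem~\ref{lem0}. Having that lemma in hand, the paper picks $\alpha$ with $\alpha^2<d/c$ so that $[cq,dq]$ is guaranteed to contain some full block $[\alpha^\ell,\alpha^{\ell+1}]$, and hence a prime. Your version bypasses the geometric-interval lemma and simply counts $\pi((\alpha+\varepsilon)q;m,a)-\pi((\alpha-\varepsilon)q;m,a)$ directly. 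Both rest on the same ingredient (the quantitative PNT in arithmetic progressions, i.e.\ the paper's Theorem~\ref{lem0}), and neither requires any uniformity in the modulus; yours is the shorter and more standard reduction, while the paper's buys a reusable geometric-interval lemma at the cost of a slightly longer detour. One small presentational remark: it is worth saying explicitly, as you do in your closing paragraph, why the difference of the two asymptotics is honest — the error absorbed in $\sim$ is $o(q/\log q)$, which is dominated by the main term $2\varepsilon q/(\phi(m)\log q)$ precisely because $\varepsilon$ is fixed; in a more compressed write-up this is the one place a reader might pause.
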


In the literature, there is a natural generalization of this concept to the set of all complex numbers $\mathbb{C}$ as follows. Let $K$ be an algebraic number field. Suppose $K$ is not a subfield of $\mathbb{R}$ and $\mathcal{O}_K$ is its ring of integers. A subset $A$ of $\mathcal{O}_K$ is said to be {\it fractionally dense } in $\mathbb{C}$, if its quotient set $R(A)$ is dense in $\mathbb{C}$.

\smallskip
 
When $K=\mathbb{Q}(i)$ with $i = \sqrt{-1}$, Garcia proved in \cite{dense-Gauss} that the set of all prime elements in $\mathcal{O}_{K}=\mathbb{Z}[i]$ is fractionally dense in $\mathbb{C}$. This has been generalized to arbitrary number fields by Sittinger in \cite{Sittinger}.

\smallskip

In \cite{gems}, \cite{Toth-Salat} and \cite{Toth},  it has been proved that if $\mathbb{N}$ has a two-partition, then at least one of them will be fractionally dense in $\mathbb{R}_{>0}$. But, we observe that $\mathbb{Z}$ has a two-partition like $\mathbb{Z} = \mathbb{N} \cup (\mathbb{Z} \setminus \mathbb{N})$ with the fact that none of them is fractionally dense in $\mathbb{R}$. Since $\mathbb{Z}[\sqrt{-d}]$ is a discrete subset of $\mathbb{C}$, it is quite natural to ask the same kind of questions for some particular type of subsets of $\mathbb{Z}[\sqrt{-d}]$. 
In this paper, we shall study the non-zero ideals of the order $\mathbb{Z}[\sqrt{-d}]$ of imaginary quadratic fields.

\smallskip

More precisely, we prove the following theorem, which is a generalization of a result in \cite{gems}.

\begin{theorem}\label{Th4}
Let $d>0$ be a squarefree integer and let $\mathfrak{a}$ be a non-zero ideal in $\mathbb{Z}[\sqrt{-d}]$.  
Let  $\mathfrak{a} =  C \cup D$ with $C \cap D = \emptyset$ be a given two-partition of $\mathfrak{a}$. 
Then, either $C$ or $D$ is fractionally dense in $\mathbb{C}$.
\end{theorem}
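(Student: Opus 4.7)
The strategy is to adapt the known real-line version, namely that a two-partition of $\mathbb{N}$ always has one part fractionally dense in $\mathbb{R}_{>0}$ (cited from \cite{gems}, \cite{Toth-Salat}, \cite{Toth} in the introduction), while exploiting that $\mathfrak{a}$ is a full-rank lattice in $\mathbb{C}$ closed under multiplication by $\mathbb{Z}[\sqrt{-d}]$.

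First, I would dispose of the trivial case where $C$ or $D$ is finite. Since $\mathfrak{a}$ is a nonzero ideal in $\mathbb{Z}[\sqrt{-d}]$, the quotient set $R(\mathfrak{a})$ contains $\mathbb{Q}(\sqrt{-d})$, which is dense in $\mathbb{C}$. If, say, $C$ is finite, then for any target $z\in\mathbb{C}$ and any $\varepsilon>0$, pick $a,b\in\mathfrak{a}$ with $|a/b-z|<\varepsilon$; for all sufficiently large $n\in\mathbb{N}$, both $na$ and $nb$ avoid the finite set $C$ and hence lie in $D$, so $D$ is fractionally dense.

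Next, assume both $C$ and $D$ are infinite and, for contradiction, that neither is fractionally dense in $\mathbb{C}$. Choose open disks $B_C\subseteq\mathbb{C}\setminus\overline{R(C)}$ and $B_D\subseteq\mathbb{C}\setminus\overline{R(D)}$ with centers $z_C,z_D\neq 0$ and small radii. The key geometric fact is that for every $c\in C\setminus\{0\}$, every lattice point in $cB_C$ must lie in $D$, for otherwise some $x\in C\cap cB_C$ would give $x/c\in R(C)\cap B_C$, a contradiction; symmetrically for $D$ and $B_D$. For $|c|$ large, $cB_C$ contains $\Theta(|c|^2)$ lattice points, producing large clusters of $D$-elements near $cz_C$. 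A density-transfer argument then shows $\overline{R(C)}=\overline{R(D)}=:E$: given $w$ a limit point of $R(C)$, pick $c_1,c_2\in C$ with $c_1/c_2$ close to $w$ and $|c_i|$ large, then in each disk $c_iB_C$ select a lattice point $d_i\in D$ with $d_i/c_i$ close to $z_C$; then $d_1/d_2$ is close to $w$, so $w\in\overline{R(D)}$. The symmetric inclusion forces $E\subsetneq\mathbb{C}$.

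For the final contradiction, note that $K:=\mathbb{Q}(\sqrt{-d})=R(\mathfrak{a})$ is dense in $\mathbb{C}$ while $R(C)\cup R(D)\subseteq E$, so $K\setminus E$ is dense in the open set $\mathbb{C}\setminus E$ and consists entirely of mixed ratios $\alpha/\beta$ with $\alpha,\beta$ of opposite colors. For any such $z=\alpha/\beta\in K\setminus E$, every scaling $(n\alpha,n\beta)$ with $n\in\mathbb{N}$ must again be mixed, forcing $S_\alpha=T_\beta$, where $S_\alpha=\{n\in\mathbb{N}:n\alpha\in C\}$ and $T_\alpha$ is its complement. Applying the real-case theorem to $\mathbb{N}=S_\alpha\cup T_\alpha$ yields one of these sets fractionally dense in $\mathbb{R}_{>0}$, and hence $\mathbb{R}_{>0}\subseteq E$. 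The plan is to iterate by replacing $\alpha,\beta$ with $r_1\alpha,r_2\beta$ for various $r_1,r_2\in\mathbb{Z}[\sqrt{-d}]\setminus\{0\}$, aiming to conclude that $E$ contains dense subsets of $\mathbb{R}_{>0}\cdot(r_1\alpha/r_2\beta)$ for a dense family of directions $r_1\alpha/r_2\beta\in K$; since $E$ is closed, this would give $E=\mathbb{C}$, the desired contradiction. The main obstacle is bridging the gap between fractional density on single rays and density of the mixed-ratio sets $\{n_1/n_2:n_1\in S_{r_1\alpha},n_2\in S_{r_2\beta}\}$ in $\mathbb{R}_{>0}$, which is not automatic from fractional density alone; a strengthening in the spirit of Theorem \ref{Th2} or a careful iteration of the disk-clustering observation above is likely required.
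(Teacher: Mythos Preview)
Your proposal is explicitly incomplete: you identify the obstacle yourself, namely that fractional density of $S_\alpha$ (or $T_\alpha$) in $\mathbb{R}_{>0}$ only yields $\mathbb{R}_{>0}\subseteq E$, and your proposed iteration over $r_1\alpha,r_2\beta$ does not produce density along other rays, because the ratios $n_1r_1\alpha/(n_2r_2\beta)$ with $n_1\in S_{r_1\alpha}$, $n_2\in S_{r_2\beta}$ are mixed and hence need not lie in $R(C)$ or $R(D)$. There is also a smaller gap earlier: in the density-transfer step you need $c_1,c_2\in C$ with $c_1/c_2$ close to $w$ \emph{and} both $|c_i|$ large (so that $c_iB_C$ contains lattice points with $d_i/c_i$ close to $z_C$), but $w\in\overline{R(C)}$ alone does not guarantee approximating pairs with both moduli large.

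The paper's route is entirely different and sidesteps these issues. Instead of comparing $\overline{R(C)}$ and $\overline{R(D)}$ globally or invoking the $\mathbb{N}$-partition theorem, it manufactures a single element $s\in\mathfrak{a}$ that can lie in neither $C$ nor $D$. With $B(\alpha,\epsilon)\cap R(C)=\emptyset$ and $B(\beta,\epsilon)\cap R(D)=\emptyset$ as in your setup, one picks $\gamma\in C$ of very large modulus possessing a lattice neighbour $\delta\in D$ (possible since $D$ is infinite), and lets $s\in\mathfrak{a}$ be the lattice rounding of $\gamma/(\alpha\beta)$. If $s\in C$, round $\alpha s$ to $t\in\mathfrak{a}$; then $t/s\approx\alpha$ forces $t\in D$, while $\delta/t\approx\gamma/(\gamma/\beta)=\beta$ places $\delta/t\in B(\beta,\epsilon)\cap R(D)$, a contradiction. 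If $s\in D$, round $\beta s$ to $t'\in\mathfrak{a}$; then $t'/s\approx\beta$ forces $t'\in C$, and $\gamma/t'\approx\alpha$ gives $\gamma/t'\in B(\alpha,\epsilon)\cap R(C)$. The whole argument is a two-step ping-pong between the two forbidden balls, with all the work going into elementary control of rounding errors once $|\gamma|$ is large enough; no global structure of $E$ and no reduction to the real case is needed.
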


Indeed, the result in Theorem \ref{Th4} is optimal in the following sense.

\begin{theorem}\label{Th5}
Let $K$ be an algebraic number field not entirely contained in $\mathbb{R}$ with $\mathcal{O}_K$ its ring of integers.
Let $\mathfrak{a}$ be a non-empty subset in $\mathcal{O}_K$.  Then,  there exist pairwise disjoint non-empty subsets $A, B$ and  $C$ of  $\mathfrak{a}$ such that none of them is fractionally dense in $\mathbb{C}$ and  $\mathfrak{a} = A \cup B \cup C$.
\end{theorem}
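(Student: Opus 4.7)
The plan is to partition $\mathfrak{a}$ according to the arguments of its elements, using three angular sectors whose argument arcs have length strictly less than $\pi$. The driving observation is the following: if a subset $S \subseteq \mathcal{O}_K\setminus\{0\}$ has all its arguments contained in an arc $J \subseteq S^1$ of length $<\pi$, then the arguments of the elements of $R(S) = \{s/s' : s,s' \in S\}$ lie in $J-J$, an arc of length $<2\pi$ centred at $0$; hence $R(S)$ is contained in a proper open sector of $\mathbb{C}$ and is not dense there. This is precisely why three pieces suffice while two do not: three arcs of length $<\pi$ can be arranged to cover $S^1$, whereas two never can. Throughout I assume $|\mathfrak{a}|\geq 3$, which is necessary for the three-partition to exist, and I write $T = \{\arg z : z\in\mathfrak{a}\setminus\{0\}\} \subseteq S^1$.

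If $|T|\leq 2$, then $T-T \pmod{2\pi}$ is finite, so $R(\mathfrak{a})$ has only finitely many distinct arguments and is already not dense in $\mathbb{C}$. Since the property ``not fractionally dense'' is inherited by subsets, any partition of $\mathfrak{a}$ into three non-empty disjoint pieces succeeds: fixing any three elements $a_1,a_2,a_3\in\mathfrak{a}$ and taking $A=\{a_1\}$, $B=\{a_2\}$, $C=\mathfrak{a}\setminus\{a_1,a_2\}$ finishes this trivial case.

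When $|T|\geq 3$, choose three distinct $\theta_1,\theta_2,\theta_3\in T$ in cyclic order on $S^1$, together with witnesses $z_j\in\mathfrak{a}$ satisfying $\arg z_j=\theta_j$. Letting $g_1,g_2,g_3>0$ denote the cyclic arc-gaps between consecutive $\theta_j$'s, we have $g_1+g_2+g_3=2\pi$. Take $V_1,V_2,V_3$ to be the Voronoi cells of $\{\theta_1,\theta_2,\theta_3\}$ on $S^1$, interpreted as half-open arcs so that they partition $S^1$ exactly. Each $V_j$ contains $\theta_j$, and its length is
\[
|V_j| \;=\; \frac{g_{j-1}+g_j}{2} \;=\; \pi - \frac{g_{j+1}}{2} \;<\; \pi .
\]
Now define
\[
A_j \;=\; \{z\in\mathfrak{a}\setminus\{0\} : \arg z \in V_j\},
\]
and place $0\in\mathfrak{a}$ (if present) into $A_1$. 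Each $A_j$ contains $z_j$, hence is non-empty; the three sets are pairwise disjoint and their union is $\mathfrak{a}$; and by the key observation each $R(A_j)$ is not dense in $\mathbb{C}$.

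The main obstacle is to guarantee simultaneously that each of the three pieces is non-empty and that the argument-set of each piece lies in an arc of length strictly less than $\pi$. A uniform tiling of $S^1$ into three $2\pi/3$-arcs respects the length bound but can easily leave one sector empty of $T$ (for instance when $T$ is concentrated in a closed half-arc, which is perfectly consistent with $\mathfrak{a}$ being fractionally dense). Building the three cells as Voronoi regions of three angles actually belonging to $T$ sidesteps this difficulty in one stroke: the strict positivity of every $g_j$ gives the length bound $<\pi$, and each cell automatically contains its own seed $\theta_j \in T$ as a witness of non-emptiness.
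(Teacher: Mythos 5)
Your proof is correct, and it takes a genuinely different route from the paper's. The paper partitions radially: it splits $\mathfrak{a}$ according to the size of $|z|^2$ into three families of shells, namely $|z|^2 \in [5^k, 2\cdot 5^k)$, $[2\cdot 5^k, 3\cdot 5^k)$, and $[3\cdot 5^k, 5\cdot 5^k)$, and shows that each of the corresponding ratio sets lies in a union of pairwise-disjoint annuli (in modulus) separated by gaps, hence misses an open annulus and is not dense. You partition angularly: you split $\mathfrak{a}$ according to the argument of $z$ into three arcs of length strictly less than $\pi$, built as Voronoi cells of three witness angles drawn from $T$, and use the observation that if all arguments of $S$ lie in an arc $J$ with $|J| < \pi$, then the arguments of $R(S)$ lie in $J - J$, an arc of length $< 2\pi$, so $R(S)$ misses an open sector. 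These two schemes are dual in spirit (modulus versus argument), and both are elementary and explicit. What your Voronoi construction buys is a clean, automatic treatment of non-emptiness: each cell contains its own seed, so each $A_j$ is non-empty by construction — a point the paper's proof does not address (its shell partition could leave one of $A$, $B$, $C$ empty for a general subset $\mathfrak{a}$, and would then need an extra patching step such as moving a single element between pieces). You are also more careful with the small cases: you explicitly note that $|\mathfrak{a}| \geq 3$ is needed for a three-partition to exist, dispose of the case $|T| \leq 2$ by noting that $R(\mathfrak{a})$ already has finitely many arguments, and explicitly park $0$ in one of the pieces, whereas the paper's shell partition tacitly omits $0$ (and, as written with $k \geq 0$, any element of modulus $< 1$).
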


\noindent{\bf Remark:} The method we adapt to prove the above Theorem goes through not only for an algebraic number field, but also for $\mathbb{C}$ in general. More precisely, one can prove the following statement. {\it There exist three disjoint subsets $A$, $B$ and $C$ such that $\mathbb{C} = A\cup B\cup C$ and none of them are fractionally dense in $\mathbb{C}$}. 

\smallskip

The next theorem exhibits an infinite subset of prime elements in $\mathbb{Z}[\sqrt{-d}]$ which is not fractionally dense in $\mathbb{C}$. For that we assume additionally that $\mathbb{Z}[\sqrt{-d}]$ is a principal ideal domain. 

\begin{theorem}\label{Th6}
Let $d = 1$ or $2$. Then there exists an infinite set $A$ of prime elements in $\mathbb{Z}[\sqrt{-d}]$ which is not fractionally dense in $\mathbb{C}$.
\end{theorem}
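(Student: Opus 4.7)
The plan is to exhibit an infinite family of prime elements that all lie on a single ray from the origin, so that their pairwise quotients land in a proper real subray of $\mathbb{C}$ and hence cannot be dense. The most natural candidates are the \emph{inert rational primes}: positive integer primes $p$ which remain prime in $\mathbb{Z}[\sqrt{-d}]$. Since $\mathbb{Z}[i]$ and $\mathbb{Z}[\sqrt{-2}]$ are both principal ideal domains (in fact Euclidean), an element is prime exactly when it is irreducible, and an inert rational prime is certainly a prime element of the ring.

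First I would identify the inert primes. For $d=1$, a standard splitting result (or direct computation with the Legendre symbol) shows that an odd prime $p$ is inert in $\mathbb{Z}[i]$ iff $-1$ is a non-residue mod $p$, i.e.\ $p\equiv 3\pmod{4}$. For $d=2$, the prime $p$ is inert iff $-2$ is a non-residue mod $p$; a short computation using $\left(\tfrac{-1}{p}\right)=(-1)^{(p-1)/2}$ and $\left(\tfrac{2}{p}\right)=(-1)^{(p^2-1)/8}$ shows this happens exactly when $p\equiv 5$ or $7\pmod 8$.

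Next, invoke Dirichlet's theorem on primes in arithmetic progressions to produce an infinite set
\[
A \;=\; \{\,p\in\mathbb{P} : p\equiv 3\!\!\!\pmod 4\,\} \quad\text{(for $d=1$)},
\qquad
A \;=\; \{\,p\in\mathbb{P} : p\equiv 5 \text{ or } 7\!\!\!\pmod 8\,\} \quad\text{(for $d=2$)}.
\]
Each element of $A$ is a prime element of $\mathbb{Z}[\sqrt{-d}]$ by the above. For any $p,q\in A$ one has $p/q\in\mathbb{R}_{>0}$, so the quotient set satisfies $R(A)\subseteq \mathbb{R}$. Since $\mathbb{R}$ has empty interior in $\mathbb{C}$, the set $R(A)$ is not dense in $\mathbb{C}$, and $A$ is therefore not fractionally dense in $\mathbb{C}$.

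There is no real obstacle here: the only things to check are the characterization of inert primes in each ring (routine quadratic-residue calculation) and the fact that inert rational primes are genuine prime elements in a PID. The restriction to $d=1,2$ in the hypothesis enters precisely to guarantee that $\mathbb{Z}[\sqrt{-d}]$ is a PID, so that irreducibility and primality coincide and the inert rational primes may unambiguously be called prime elements.
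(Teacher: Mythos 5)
Your proof is correct, but it takes a genuinely different route from the paper. The paper invokes Lemma~\ref{lem2} (the Hulse--Ram Murty version of Bertrand's postulate for number fields) to produce, for each $n>1$, a prime element $\alpha_n\in\mathbb{Z}[\sqrt{-d}]$ with $N(\alpha_n)\in[B^{2n-1},B^{2n}]$; the $\alpha_n$ thus have well-separated norms, and since $N(\alpha)=|\alpha|^2$ every quotient $\alpha_m/\alpha_n$ with $m\neq n$ has modulus outside $\bigl(1/\sqrt{B},\sqrt{B}\bigr)$, so the quotient set misses an annulus. Your argument instead observes that the inert rational primes (characterized by the residue classes $3\pmod 4$ for $d=1$ and $5,7\pmod 8$ for $d=2$) form, by Dirichlet, an infinite set of prime elements lying on $\mathbb{R}_{>0}$, whence $R(A)\subset\mathbb{R}$ which has empty interior in $\mathbb{C}$. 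Your approach is more elementary: it avoids the number-field Bertrand postulate entirely and uses only a quadratic-residue computation plus Dirichlet's theorem, both of which the paper already deploys elsewhere.

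One small correction to your closing remark: the PID hypothesis is genuinely needed in the \emph{paper's} proof (to pass from the prime \emph{ideal} produced by Bertrand's postulate to a prime \emph{element}), but it is not needed in \emph{yours}. A rational prime $p$ being inert means precisely that the ideal $(p)$ is prime, which is the definition of $p$ being a prime element of $\mathbb{Z}[\sqrt{-d}]$ --- no appeal to the PID property or to irreducibility is required. So your argument actually proves the conclusion for every squarefree $d>0$, not only $d=1,2$; the restriction in the theorem's statement is an artifact of the paper's chosen method, not of the result itself.
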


% For alignments use AmS-LaTeX constructions not \eqnarray.

%% - theorems and proofs
%\begin{thm}[optional text]
% The optional material will be typeset as part of the theorem heading
%\end{thm}

%\begin{proof}[Optional proof heading]
% the proof
%\end{proof}
% An end-of-proof symbol (open box) will be typeset at the
% end of the proof.

\section{Preliminaries}

In the preceding section, we have defined the set $D(a,m)$ for natural numbers $a$ and $m$. Note that it is an infinite set if and only if $gcd(a,m) =1$, by {\it Dirichlet's Prime Number Theorem}. For any real number $x > 1$, we let $\pi(a, m, x)$ denote the cardinality of the set of all primes $p\equiv a\pmod{m}$ with $p\leq x$.

\begin{theorem}\label{lem0} (Dirichlet Prime Number Theorem) 
Let $a$ and $m$ be given integers with $a\geq 1$, $m\geq 2$ and $gcd(a,m) =1$. For any real number $x > 1$, we define
$$
G(x) = \frac{\phi(m)\pi(a,m,x)\log x}{x},
$$
where $\phi(m)$ denotes the Euler's phi function.  Then we have 
$$
\lim_{x\to\infty} G(x) = 1.$$
\end{theorem}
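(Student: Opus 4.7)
The plan is to treat this as the classical Prime Number Theorem in arithmetic progressions, which is typically proved by the Dirichlet $L$-function method. First, I would introduce the group of Dirichlet characters $\chi$ modulo $m$. Using the orthogonality relation
$$
\frac{1}{\phi(m)} \sum_{\chi \bmod m} \overline{\chi(a)} \chi(n) = \begin{cases} 1 & \text{if } n \equiv a \pmod m, \gcd(n,m)=1, \\ 0 & \text{otherwise,} \end{cases}
$$
one can write the Chebyshev-type weighted sum
$$
\psi(a,m,x) := \sum_{\substack{n \leq x \\ n \equiv a \bmod m}} \Lambda(n) = \frac{1}{\phi(m)} \sum_{\chi \bmod m} \overline{\chi(a)} \sum_{n \leq x} \chi(n)\Lambda(n),
$$
which reduces everything to understanding the character sums $\psi(x,\chi) = \sum_{n \leq x} \chi(n)\Lambda(n)$ for each $\chi$ modulo $m$. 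These are in turn controlled by the analytic behavior of the Dirichlet $L$-functions $L(s,\chi) = \sum_{n} \chi(n) n^{-s}$ through the Euler product and its logarithmic derivative.

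The key analytic input, and the step I expect to be the main obstacle, is the non-vanishing of $L(s,\chi)$ on the line $\Re(s) = 1$. For the principal character $\chi_0$ one has $L(s,\chi_0) = \zeta(s) \prod_{p \mid m}(1 - p^{-s})$, which inherits the simple pole of $\zeta$ at $s = 1$ and thus contributes the main term $x/\phi(m)$. For a non-principal character one must show $L(1,\chi) \neq 0$; the case of complex $\chi$ follows from the identity $\prod_{\chi} L(s,\chi) = \zeta_{\mathbb{Q}(\zeta_m)}(s)$ combined with the observation that a zero would cancel the pole. The delicate case of a real non-principal character requires a separate argument (classically via a product manipulation showing $\prod_\chi L(s,\chi) \geq 1$ near $s=1$, or via the class number formula).

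Once non-vanishing on $\Re(s) = 1$ is established, I would apply the Wiener--Ikehara Tauberian theorem to the Dirichlet series $-L'(s,\chi)/L(s,\chi)$ to deduce $\psi(x,\chi_0) \sim x$ and $\psi(x,\chi) = o(x)$ for $\chi \neq \chi_0$. Summing over characters gives $\psi(a,m,x) \sim x/\phi(m)$, and a standard partial summation passing from $\Lambda$-weighted counts to the prime counting function $\pi(a,m,x)$ (absorbing the contribution of prime powers, which is $O(\sqrt{x})$) yields
$$
\pi(a,m,x) \sim \frac{1}{\phi(m)} \cdot \frac{x}{\log x},
$$
which is exactly the assertion $G(x) \to 1$. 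Since this is the classical result of Dirichlet (refined by de la Vall\'ee Poussin and Hadamard), in the paper itself the sensible presentation is simply to quote it rather than reproduce the proof.
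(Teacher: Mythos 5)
The paper states this result as a known classical theorem and gives no proof of its own; it is simply invoked (in Lemma \ref{lem1}) as the prime number theorem for arithmetic progressions. Your sketch of the standard $L$-function proof --- orthogonality of characters, non-vanishing of $L(1,\chi)$ with the separate treatment of the real case, the Wiener--Ikehara theorem applied to $-L'/L$, and partial summation to pass from $\psi(a,m,x)$ to $\pi(a,m,x)$ --- is accurate, and your closing observation that the sensible move here is to quote the result rather than reprove it is exactly what the paper does.
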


The following lemma proves the existence of primes in certain arithmetic progressions in some special intervals. 

\begin{lemma}\label{lem1}
Let  $a$ and $m$ be given natural numbers with $m\geq 2$ and $gcd(a,m) =1$ and let $\alpha > 1$ be a given real number.
Then there exists a positive integer $m_0 = m_0(\alpha)$, depending only on $\alpha$, such that for all integers $n \geq m_0$, we have,  
$$
[\alpha^n,\alpha^{n+1}] \cap D(a,m) \neq \emptyset.
$$ 
\end{lemma}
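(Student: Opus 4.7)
The plan is to use Theorem \ref{lem0} (Dirichlet's Prime Number Theorem in the quantitative form given) to count the primes $p \equiv a \pmod{m}$ in the interval $[\alpha^n, \alpha^{n+1}]$ and show that this count becomes positive (indeed tends to infinity) as $n \to \infty$. The number in question is
\[
N(n) := \pi(a,m,\alpha^{n+1}) - \pi(a,m,\alpha^n),
\]
and it suffices to prove $N(n) \geq 1$ for $n$ sufficiently large, where the threshold $m_0$ depends only on $\alpha$ (and on the fixed pair $a,m$, but one verifies the dependence on $\alpha$ is the essential one).

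First I would fix an auxiliary $\varepsilon > 0$ with $\varepsilon < \frac{\alpha - 1}{\alpha + 1}$, which is possible since $\alpha > 1$. By Theorem \ref{lem0} the function $G(x)$ tends to $1$, so there is an $N_0$ (depending on $\varepsilon$, hence on $\alpha$) such that for all $x \geq \alpha^{N_0}$ one has
\[
(1-\varepsilon)\,\frac{x}{\phi(m)\log x} \;\leq\; \pi(a,m,x) \;\leq\; (1+\varepsilon)\,\frac{x}{\phi(m)\log x}.
\]
Applied at $x = \alpha^{n+1}$ and $x = \alpha^n$ with $n \geq N_0$, this gives
\[
N(n) \;\geq\; \frac{1}{\phi(m)\log\alpha}\left[(1-\varepsilon)\frac{\alpha^{n+1}}{n+1} - (1+\varepsilon)\frac{\alpha^n}{n}\right]
\;=\; \frac{\alpha^n\,\bigl[((1-\varepsilon)\alpha - (1+\varepsilon))n - (1+\varepsilon)\bigr]}{\phi(m)\,n(n+1)\log\alpha}.
\]
By the choice of $\varepsilon$ the coefficient $(1-\varepsilon)\alpha-(1+\varepsilon)$ is strictly positive, so the bracket is positive for all sufficiently large $n$; call the resulting threshold $m_0 = m_0(\alpha)$. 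For $n \geq m_0$ the right-hand side exceeds $1$ (in fact it grows like $\alpha^n/n$), giving a prime of the required form in $[\alpha^n,\alpha^{n+1}]$.

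I do not foresee a substantive obstacle; the only mildly delicate point is choosing $\varepsilon$ before invoking Theorem \ref{lem0} so that the main terms at the two endpoints do not cancel. The argument also makes clear that the dependence of $m_0$ on $\alpha$ is genuine (as $\alpha \downarrow 1$ the admissible $\varepsilon$ shrinks and $m_0$ blows up), matching the statement of the lemma.
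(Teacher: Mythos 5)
Your proof is correct, and it takes a genuinely different route from the paper's. The paper argues by contradiction through a logarithmic change of variables: it sets $L(x)=\log_\alpha G(x)$, observes that $G(x)\to1$ forces $L(\alpha^{n+1})-L(\alpha^n)>-\tfrac12$ for all large $n$, and then shows that an empty interval $[\alpha^{r},\alpha^{r+1}]$ would force $\pi(a,m,\cdot)$ to be constant there, making $L$ drop by $1-\log_\alpha\bigl(\tfrac{r+1}{r}\bigr)$, which exceeds $\tfrac12$ for large $r$ --- a contradiction. You instead bound $N(n)=\pi(a,m,\alpha^{n+1})-\pi(a,m,\alpha^n)$ directly from below using the two-sided $(1\pm\varepsilon)$ form of Theorem~\ref{lem0}, choose $\varepsilon<\tfrac{\alpha-1}{\alpha+1}$ so the leading terms do not cancel, and read off that $N(n)\gg \alpha^n/n\to\infty$. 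Your argument is the more transparent of the two and yields strictly more (a quantitative lower bound, not just nonemptiness), at the modest cost of having to unpack the asymptotic into explicit upper and lower estimates and make the right choice of $\varepsilon$ up front; the paper's version avoids that bookkeeping by working with $\log G$ and a single one-sided inequality. Both proofs, incidentally, share the same harmless imprecision that the threshold really depends on $(a,m)$ as well as $\alpha$, which you correctly flag.
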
 

\begin{proof}
For all real number $x > 1$, we let 
$$
L(x) = \frac{\log G(x)}{\log \alpha} = \log_\alpha(G(x))
$$ 
where $G(x)$ as defined in Theorem \ref{lem0}. By Theorem \ref{lem0},  we know that $\displaystyle\lim_{x \rightarrow \infty} G(x) = 1$  and hence we 
have $\displaystyle\lim_{x \rightarrow \infty} L(x) = 0$. Therefore, there exists an integer $n_0 >0$ such that 
\begin{equation}\label{eq1}
L(\alpha^{n+1})-L(\alpha^n) >-\frac{1}{2} \mbox{ for every integer } n > n_0.
\end{equation}
Suppose there exists a strictly increasing sequence $\{r_n\}_n$ of natural numbers such that 
$$
[\alpha^{r_n}, \alpha^{r_n+1}]\cap D(a,m) = \emptyset.
$$
First we shall observe the following.  Since $[\alpha^{r_n}, \alpha^{r_n+1}]\cap D(a,m) =\emptyset,$  we get,  $\pi(a,m,\alpha^{r_n}) = \pi(a, m, \alpha^{r_n+1})$.  Hence, we get, 
\begin{equation}\label{eq2}
L(\alpha^{r_n+1})-L(\alpha^{r_n}) = \log_\alpha
\left(\frac{G(\alpha^{r_n+1})}{G(\alpha^{r_n})}\right) = \log_\alpha
\left(\frac{r_n+1}{\alpha r_n}\right) = \epsilon(r_n)-1,
\end{equation}
where $\epsilon(r_n) = \log_\alpha \left(\frac{r_n+1}{r_n}\right)$. Since $\displaystyle\lim_{n\to\infty} r_n = \infty$, there exists $n_1$ such that 
for all $n\geq n_1$, we have $\epsilon(r_n) < 1/2$. 

\smallskip

Put  $m_0 = \max\{n_0,n_1\}$. Then, by \eqref{eq2},  for all $r_n$ with $n\geq m_0$, we get 
$$
L(\alpha^{r_n+1})-L(\alpha^{r_n})= \epsilon(r_n)-1 < -\frac{1}{2},
$$
which is a contradiction to \eqref{eq1}. This proves the lemma.
\end{proof}

We also need the following number field version of the 
Bertrand's postulate  which is due to Hulse and  Ram Murty (see \cite{hr}).

\begin{lemma}\label{lem2}
\noindent{\bf({Bertrand's postulate for number fields})}
Let $K$ be an algebraic number field with $\mathcal{O}_K$ its ring of integers. Then there exists a smallest number $B_K > 1$ such that for every $x > 1$, we can find a prime ideal $\mathfrak{p}$ in $\mathcal{O}_K$ whose norm $N(\mathfrak{p})$ lies inside the interval $[x,B_{K}x]$.
\end{lemma}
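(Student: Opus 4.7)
The plan is to deduce the statement from the Landau Prime Ideal Theorem, the number field analog of the Prime Number Theorem, used as a black box. Recall that $\pi_K(x)$, the number of prime ideals $\mathfrak{p}$ of $\mathcal{O}_K$ with $N(\mathfrak{p}) \leq x$, satisfies $\pi_K(x) \sim x/\log x$ as $x \to \infty$; in effective form one has $|\pi_K(x) - \operatorname{Li}(x)| = O(x \exp(-c_K \sqrt{\log x}))$ for constants depending only on $K$.

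First I would handle large $x$ asymptotically. For any fixed real $B > 1$, the asymptotic $\pi_K(Bx) - \pi_K(x) \sim (B-1)\, x/\log x$ tends to infinity, so there is a threshold $x_0 = x_0(B)$ such that for every $x \geq x_0$ the interval $[x, Bx]$ contains the norm of some prime ideal of $\mathcal{O}_K$. Next I would address the bounded range $x \in [1, x_0]$: only finitely many prime ideal norms $N_1 < N_2 < \cdots < N_r$ can occur below $B x_0$, and the ratios $N_{i+1}/N_i$ of consecutive values are bounded, so enlarging $B$ to $B' := \max\{B,\, N_1,\, \max_i N_{i+1}/N_i\}$ produces an interval $[x, B'x]$ containing some $N_j$ for every $x \in [1, x_0]$ as well. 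Finally, defining $B_K := \sup_{x > 1} \inf\{N(\mathfrak{p})/x : N(\mathfrak{p}) \geq x\}$ gives the smallest admissible constant; this supremum is finite because the inner infimum tends to $1$ as $x \to \infty$ (by the PNT analysis) and is a step function of $x$ on any compact subinterval of $(1, \infty)$, hence bounded, and the supremum is actually attained by the step-function structure.

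The main obstacle, and the reason this merits a separate lemma rather than being a one-line corollary of Landau's theorem, is the transition from an asymptotic PNT-type input to a clean interval-containment statement that holds uniformly in $x$ all the way down to $x = 1$. One must control not only that $\pi_K(Bx) - \pi_K(x)$ is eventually positive, but also the quantitative threshold at which this happens, in order to carry out the finite bookkeeping for small $x$ and obtain an explicit bound on $B_K$ in terms of invariants of $K$. This is precisely the effective analysis carried out by Hulse and Ram Murty in \cite{hr}, and I would follow their strategy rather than attempt to re-derive the effective Landau Prime Ideal Theorem from scratch.
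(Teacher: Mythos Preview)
The paper does not prove this lemma at all: it is stated as a known result due to Hulse and Ram Murty and simply cited to \cite{hr}, with no argument given. Your proposal therefore goes further than the paper does. Your sketch is sound --- Landau's Prime Ideal Theorem gives $\pi_K(Bx)-\pi_K(x)\sim (B-1)x/\log x$, which handles large $x$, and finite bookkeeping with the first few prime-ideal norms handles the compact range $(1,x_0]$; the supremum defining $B_K$ is then finite and attained because the function $x\mapsto \inf\{N(\mathfrak p)/x:N(\mathfrak p)\ge x\}$ is piecewise of the form $N_j/x$ and tends to $1$ at infinity. Your closing remark, that the effective analysis is precisely what Hulse and Ram Murty carry out and that you would cite them rather than redo it, is exactly how the paper treats the lemma, so in the end your approach and the paper's coincide: quote \cite{hr}.
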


\section{Proof of Theorem \ref{Th1}}

Given that $a$, $b$ and $c$ are integers satisfying  $1\leq a < c\leq b$ and the set
$$
A = \bigcup_{k=0}^\infty[ab^k, cb^k)\cap\mathbb{N}.
$$ 

\noindent{(1)} Assume that $ab <c^2$. Then, we prove that  $B = A\cup\{b^k : k =0,1,2, \ldots\}$ is fractionally dense in $\mathbb{R}_{>0}$. 

\smallskip

\noindent{\bf Claim 1.}  $\displaystyle \bigcup_{k \in \mathbb{Z}}\left(\left[\frac{ab^{k}}{c},ab^k\right) \cup \left[ab^k,cb^k\right)\right) = \displaystyle \bigcup_{k\in \mathbb{Z}}\left[\frac{ab^k}{c},cb^k\right) = (0, \infty)$.

\smallskip

The condition $ab < c^2$ implies that any two consecutive intervals of the form $\displaystyle \left[\frac{ab^k}{c},cb^k\right)$ and $\displaystyle \left[\frac{ab^{k+1}}{c},cb^{k+1}\right)$  have non-empty intersection.  Note that, $cb^k \rightarrow \infty$ as $k\to\infty$, and $\frac{ab^k}{c} \rightarrow 0$ as $k\to -\infty$. Therefore, we get
$$
(0,\infty) \subset \displaystyle \bigcup_{k \in \mathbb{Z}}\left[\frac{ab^{k}}{c},ab^k\right) \cup \left[ab^k,cb^k\right)
$$
and hence  Claim 1 follows.

\smallskip

Let $\xi \in \mathbb{R}_{>0}$ be a given element and $\epsilon >0$ be given.  We shall prove that there exists $\alpha \in R(B)$ such that $|\xi - \alpha| <\epsilon$. 

\smallskip

\noindent{\bf Case 1.} $\xi \in [ab^k, cb^k)$ for some integer $k$.

\smallskip

Let $\epsilon > 0$ be given.  Then there exists a sufficiently large natural number $j$  such that $a < b^j \epsilon$. Therefore, we get, 
$$
ab^{j+k} \leq b^j \xi \leq cb^{j+k}.
$$
Then, there exists a non-negative integer $\ell$ satisfying 
\begin{equation}\label{eq3}
ab^{j+k}+\ell \leq b^j \xi \leq a(b^{j+k}+1)+\ell
\end{equation}
with
\begin{equation}\label{eq4}
0 \leq \ell \leq (c-a)b^{j+k}-1.
\end{equation}
Then, by \eqref{eq3} and \eqref{eq4}, we get, 
\begin{eqnarray*}
0 \leq b^j \xi - (ab^{j+k}+\ell) \leq a &\implies & 0 \leq \xi - \frac{ab^{j+k}+\ell}{b^j} \leq \frac{a}{b^j} < \epsilon
\end{eqnarray*}
By \eqref{eq4}, we note that $ab^{j+k}+\ell \geq ab^{j+k}$ and $ab^{j+k}+\ell < cb^{j+k}$ and hence the element $\displaystyle \frac{ab^{j+k}+\ell}{b^j}  = \alpha \in R(B)$, as desired.

\smallskip

\noindent{\bf Case 2.} $\xi \in \displaystyle\left[\frac{ab^k}c, ab^k\right)$ for some integer $k$. 

\smallskip

Since the proof is similar to Case 1, we shall omit the proof here. Hence, we conclude that $B$ is fractionally dense in $\mathbb{R}_{>0}$. This proves the first assertion. 

\smallskip

\noindent{(2)} If possible, suppose $c^2 < a^2b$. We shall show that $A$ is not fractionally dense in $\mathbb{R}_{>0}$.

\smallskip

Let $x,y \in A$ be arbitrary elements. Then, by the definition of $A$,  there exist non-negative integers $k_1$ and  $k_2$ such that 
$$
x \in [ab^{k_1},cb^{k_1}) \mbox{ and } y \in [ab^{k_2},cb^{k_2}).
$$
Therefore, we get
 $$
 \frac{a}{c}b^{k_1 - k_2} < \frac{x}{y} \leq \frac{c}{a}b^{k_1 - k_2}.
 $$
Hence,  every element of $R(A)$ lies in the interval  of the form 
$$
I_\ell=\left(\frac{a}{c}b^\ell,\frac{c}{a}b^\ell\right]
$$
for some $\ell\in \mathbb{Z}$. 

\smallskip

Since by the assumption, $c^2 < a^2b$, we get $\displaystyle \frac{c}{a} < \frac{ab}{c}.$  Therefore, for any integers $j < k$, we have 
$$\frac{c}{a}b^j < \frac{a}{c} b^{j+1} \leq \frac{a}{b}b^k.
$$ 
Thus, we get, $I_j \cap I_k = \emptyset$ for all integers $j$ and $k$ such that $j < k$. Then  the interval $\displaystyle\left(\frac{c}{a}b^j,\frac{a}{c}b^{j+1}\right]$   is non-empty  and is not of the form $I_\ell$ for any $\ell \in \mathbb{Z}$.  Hence, we conclude that 
  $$R(A) \cap \left(\frac{c}{a}b^j,\frac{a}{c}b^{j+1}\right] = \emptyset
  $$ 
  which implies that $A$ is not fractionally dense in $\mathbb{R}_{>0}$. $\hfill\Box$

\section{Proof of Theorem \ref{Th2}}

It is given that $U$ and $V$ are subsets of $\mathbb{N}$ such that $d(U)$ exists and $d(U) = \gamma >0$. 

\smallskip

Suppose  $V$ is an infinite subset of $\mathbb{N}$. For a positive real number $X$, let 
$$
U(X) := \# \{u \in U : u \leq X\}
$$
counts the number of elements of $U$ less than or equal to $X$.  Since $U$ has natural density $\gamma>0$, we have,  
$$
\displaystyle \lim_{X \to \infty} \frac{U(X)}{X} = \gamma > 0  \iff 
U(X)= \gamma X + o(X) \mbox{ for all large enough } X
$$
where $o(X)$ stands for a nonnegative function $g(X)$ such that $g(X)/X \to 0$ as $X\to \infty$. 
Let $a$ and $b$ be two real numbers  satisfying $0 < a < b$. We need to prove that there exist $u \in {U}$ and $v\in V$  such that $a < \displaystyle\frac{u}{v} \leq b$.
 For that, we have
$$
\lim_{X \to \infty} \frac{U(aX)}{U(bX)} = \displaystyle \lim_{X \to \infty} \frac{\gamma aX + o(aX)}{\gamma bX + o(bX)}= \frac{a}{b} < 1.
$$
Put $2\epsilon = \displaystyle 1-\frac{a}{b}$. Since $a < b$, we see that $\epsilon >0$. For this $\epsilon$, there exists $X_0$ such that 
$$
\left|U(aX) - \frac{a}{b}U(bX)\right| < \epsilon U(bX)
$$
holds true for all $X\geq X_0$. This implies 
$$
U(aX)  < \left(\frac{a}b + \epsilon\right) U(bX) < U(bX)
$$ 
for all $X\geq X_0$.  In other words, for all $X\geq X_0$, there exists $u\in U$ such that  $aX < u \leq bX$.

\smallskip

Since $V$ is infinite, we can choose $v \in V$  such that $v\geq X_0$. Therefore, there exists $u\in U$ such that    $av < u \leq bv$ holds true. In other words, we have $a < \displaystyle\frac{u}{v} \leq b$. Hence, we conclude that $\displaystyle\frac{U}{V}$ is dense in $\mathbb{R}_{>0}$.

\smallskip

Conversely, if possible, suppose that $V$ is finite, say, $V=\{v_1,\ldots,v_k\}$. Then
$$
\frac{U}{V}= A_1 \cup \ldots \cup A_k
$$
where $A_j = \displaystyle\left\{\frac{u}{v_j} : u \in U\right\}$ for $j=1,\ldots,k$.

\smallskip

Since $U \subset \mathbb{N}$,  we see that $U$ is a discrete subset of $\mathbb{R}_{>0}$. Hence, each of the sets $A_j$ is discrete and therefore $\frac{U}{V}$, being a finite union of discrete sets, is also discrete. Hence, $\frac{U}{V}$ is not dense in $\mathbb{R}_{>0}$. This proves the assertion.  $\hfill\Box$

\section{Proof of Theorem \ref{Th3}}

It is given that $U$ is a subset of $\mathbb{P}$ such that $\delta(U)$ exists and equals $\gamma >0$. 

\smallskip

Suppose $V$ is an infinite subset of $\mathbb{N}$. For any positive real number $X$, we let 
$$
U(X) = \#\left\{u\in U\ :  \ u\leq X\right\}
$$
which counts the number of element of $U$ less than or equal to $X$. Since $\delta(U)=\gamma > 0$, for all large enough $X$, we have 
$$
U(X) = \gamma\pi(X) + o(\pi(X)).
$$
Therefore, for any real numbers $0<a<b$, we see that 
$$
\lim_{X\to\infty}\frac{U(aX)}{U(bX)} = \frac{a}{b} < 1.$$
The rest of the proof is verbatim to the  proof of Theorem \ref{Th2} and hence we omit the proof here.
$\hfill\Box$

\section{Proof of Theorem \ref{Th7}}

For given natural numbers $a$, $b$, $m$ and $n$ with $m\geq 2$, $n\geq 2$ and gcd$(a,m) = 1 =$ gcd$(b,n)$, let
$$
A = \left\lbrace \frac{p}{q} \quad : \quad  p,q \in \mathbb{P},\quad p \equiv a\pmod{m}, \quad q \equiv b\pmod{n} \right\rbrace
$$ 
be a subset of $\mathbb{R}_{>0}$.
We shall prove that $A$ is dense in $\mathbb{R}_{>0}$. For that, it is enough to show that $A \cap [c,d] \neq \emptyset$ for a non-empty interval $[c,d]$ of $\mathbb{R}_{>0}$. In other words, we prove that $D(a,m)\cap [qc, qd] \ne \emptyset$ 
for some prime number $q\equiv b\pmod{n}$. 

\smallskip

Let $c$, $d$ with $c<d$ be two given positive real numbers. We choose a real number $\alpha$ with $\alpha > 1$ and $\alpha^2 < \displaystyle \frac dc$. 
Then by Lemma \ref{lem1}, there exists an integer $m_0 = m_0(\alpha)$ such that for all integers $k\geq m_0$, we have $D(a, m) \cap [\alpha^k, \alpha^{k+1}] \ne \emptyset.$ Now, we choose a prime $q\in D(b, n)$ such that $q > \displaystyle\frac{\alpha^{m_0}}{c}$. 
Observe that 
$$\log_\alpha (dq)- \log_\alpha (cq) = \log_\alpha \left(\frac{d}{c}\right) > \log_\alpha \alpha^2 = 2.
$$
Thus, there exists an integer $\ell$ such that the interval $[\ell, \ell+1]$  is contained in the interval $[\log_\alpha(cq), \log_\alpha(dq)]$ 
 whence 
 $$[\alpha^\ell,\alpha^{\ell+1}] \subset [cq,dq].
 $$ 
 Since $\alpha^\ell \geq cq > \alpha^{m_0}$, we get  $\ell > m_0$.  Hence,  there exists a prime $p \in D(a,m) \cap
[\alpha^\ell,\alpha^{\ell+1}]$. This proves the theorem. $\hfill\Box$

\smallskip

\section{Proof of Theorem \ref{Th4}}

Let $d > 0$ be a square-free integer and let $\mathfrak{a}$ be a non-zero ideal of $\mathbb{Z}[\sqrt{-d}]$, generated by two elements $a$ and $b$ of $\mathbb{Z}[\sqrt{-d}]$. Let $\mathfrak{a} = C\cup D$ be the given two-partition of $\mathfrak{a}$. Therefore,  we get, $C\cap D = \emptyset$ and $C\cup D = \mathfrak{a}$. Note that, if $C$ is finite, then $D$ is infinite and vice versa. 

\smallskip

\noindent{\bf Case 1.} $C$ is finite.

\smallskip

Let $C = \{\alpha_1, \ldots, \alpha_r\}$ with $\alpha_i \in \mathfrak{a}$. Note that the quotient set of $\mathfrak{a}$ is $$\left\{ \frac{ax+by}{ax'+by'} ~|~ x, y, x',y' \in \mathbb{Z}[\sqrt{-d}] \right\},$$ and we denote it by $R(\mathfrak{a})$. 
Now, we see that 
\begin{equation}\label{th4.1}
 R(\mathfrak{a}) =  R(C\cup D) = R(D) \cup A_1 \cup\ldots \cup A_r,
\end{equation}
where 
$$
A_j = \left\{\frac{\alpha_j}{\beta} \  : \ \beta\in \mathfrak{a}\right\} \cup \left\{\frac{\beta}{\alpha_j} \  : \ \beta\in \mathfrak{a}\right\}
$$ 
for all $j = 1, 2, \ldots, r$. Since $\mathbb{Z}[\sqrt{-d}]$ is discrete in $\mathbb{C}$, we see that $A_j$'s are nowhere dense subsets in $\mathbb{C}$. Since $R(\mathfrak{a})$ is dense in $\mathbb{C}$, we see that $R(D)\cup A_1\cup \ldots\cup A_r$ is dense in $\mathbb{C}$, where $A_1\cup\ldots \cup A_r$ is a nowhere dense subset in $\mathbb{C}$. 

\smallskip

If $R(D)$ is not dense in $\mathbb{C}$, then there exists an open ball $B$ such that $B\cap \overline{R(D)} = \emptyset$.  Therefore, $B \subset \overline{A_1\cup\ldots\cup A_r} = A_1\cup\ldots \cup A_r$ which is a contradiction, as $A_1\cup\ldots\cup A_r$ has empty interior. 
Hence, $D$ is fractionally dense in $\mathbb{C}$.

\smallskip

\noindent{\bf Case 2.} Both the sets $C$ and $D$ are infinite subsets of $\mathfrak{a}$.

\smallskip

Suppose that neither $C$ nor $D$ is fractionally dense in $\mathbb{C}$. Then there exists $\epsilon > 0$ and non-zero complex numbers $ \alpha$ and $\beta$ such that 
\begin{equation}\label{th4.2}
B(\alpha, \epsilon ) \cap R(C) = \emptyset \mbox{ and } B(\beta, \epsilon ) \cap R(D) = \emptyset,
\end{equation}
where $B(z, r)$ denotes the open ball of radius $r$, centered at $z$ in the complex plane. 
 Now, choose a sufficiently large integer $n_0$  satisfying
\begin{equation}\label{th4.3}
\frac{\left(\begin{aligned}& | (1+\sqrt{-d})(a+b)|+|(1+\sqrt{-d})\beta(a+b)|\\
&\qquad + |(1+\sqrt{-d})\alpha\beta( a+b)|\end{aligned}\right)^2}{n_0} < \epsilon
\end{equation}
and
\begin{equation}\label{th4.4}
  \frac{\left(|(1+\sqrt{-d}) \alpha (a+b)| +|(1+\sqrt{-d})\alpha \beta (a+b)|\right)^2 }{n_0}  <  \epsilon.
\end{equation}
Once $n_0$ is chosen, as both $C$ and $D$ are infinite sets, we can find  $\gamma \in C$ satisfying
\begin{equation} \label{th4.5}
 |\gamma|^2 > n_0 | \alpha |^2, \ \  |\gamma|^2 > n_0 | \beta |^2 \ \mbox{ and } |\gamma |^2 > n_0 | \alpha \beta|^2
\end{equation}
together with the following constraint  
\begin{equation}\label{th4.6}
D_1\cap D \neq \emptyset,
\end{equation}
where 
$$
D_1 = \{ \gamma \pm a, \gamma \pm b , \gamma \pm a \pm b\}.
$$ 
To see this fact, suppose, if possible, that for every $\gamma \in C$ satisfying \eqref{th4.5}, we have  
$ D_1 \cap D = \emptyset$. 
This implies that $D$ is bounded. Since $\mathbb{Z}[\sqrt{-d}]$ is discrete,  it follows that $D$ is finite, which is a contradiction. 
Also, note that all the elements of $D_{1}$ can be written as $ \gamma \pm \epsilon a \pm \epsilon ^{'} b$ for some $ \epsilon, \epsilon^{'} \in \{0,1\}$ such that $(\epsilon, \epsilon') \ne (0, 0)$.

\smallskip

Now, write the complex number 
$$\frac{\gamma}{\alpha\beta} = \gamma_1 a + \gamma_2 b, \mbox{ for some } \gamma_1 = x_1 +\sqrt{-d} y_1, \gamma_2 = x_2 +\sqrt{-d} y_2
$$
such that  $x_1, y_1, x_2, y_2 \in \mathbb{R}$ and define 
 $$
    s =(\langle x_1\rangle + \sqrt{-d} \langle y_1 \rangle)a + (\langle x_2 \rangle + \sqrt{-d} \langle y_2 \rangle)b. 
  $$
  where \[
\langle x\rangle = 
\begin{cases}
\lceil x \rceil; & \mbox{ if } x >0 \\
\lfloor x \rfloor; & \mbox{ if } x <0
\end{cases}
\] 
and $\lceil x\rceil$ is the ceiling of $x$ and $\lfloor x\rfloor$ is the floor of $x$. 
  Note that 
  \begin{equation}\label{th4.7}
  s = \frac{\gamma}{\alpha\beta} \pm (\epsilon_1 \pm\sqrt{-d}\epsilon'_1)a \pm (\epsilon_2 \pm\sqrt{-d}\epsilon'_2)b \in \mathfrak{a},
 \end{equation}
  for some $\epsilon_1, \epsilon'_1, \epsilon_2, \epsilon'_2 \in [0, 1)$.

\smallskip

\noindent{\bf Claim 1.} $s \not\in C\cup D$

\smallskip

If we prove the above claim, then we get a contradiction to the fact that  $s\in \mathfrak{a} = C\cup D$. Hence, to finish the proof of this theorem, it is enough to prove the claim. Since $s\in \mathfrak{a} = C\cup D$ and $C\cap D = \emptyset$, the element $s$ lies inside $C$ or $D$ but not both. 
If possible, we assume that $s\in C$.

\smallskip
 
 Now,  we write $ \alpha s =\delta_1 a+ \delta_2 b $  for some $\delta_1=x_3 +\sqrt{-d}y_3$, $\delta_2= x_4+\sqrt{-d} y_4$  such that $ x_3$, $y_3$, $x_4$, $y_4 \in \mathbb{R}$ and define 
\[
    t = (\langle x_3\rangle + \sqrt{-d} \langle y_3 \rangle)a + (\langle x_4 \rangle + \sqrt{-d} \langle y_4 \rangle)b,   
  \]
  where $\langle x \rangle$ is defined as above.
  
  \smallskip
  
  Then 
  \begin{equation}\label{th4.8}
  t = \alpha s \pm (\epsilon_{3} \pm \sqrt{-d}\epsilon'_3)a \pm (\epsilon _4 \pm\sqrt{-d}\epsilon'_4)b \in \mathfrak{a}
  \end{equation}
  for some $ \epsilon_{3}, \epsilon^{'}_3, \epsilon_4, \epsilon'_4 \in [0, 1)$. Let $d(z_1, z_2)$ denote the usual distance function in $\mathbb{C}$ and we estimate the distance between $t/s$ and $\alpha$ as follows. 
  
Since, by \eqref{th4.5}, the inequality $\displaystyle |s|^2 \geq \left|\frac{\gamma}{\alpha\beta}\right|^2 > n_0$  holds, we see that   
  \begin{align*}
  d\left(\frac{t}{s}, \alpha\right)^2 & =  \left| \frac{t-\alpha s}{s}\right|^2 \\
                           & \leq \left| \frac{(\epsilon_{3} \pm \sqrt{-d}\epsilon'_3)a \pm (\epsilon _4 \pm\sqrt{-d}\epsilon'_4)b}{s}\right|^2 \\ 
                           & \leq \left|\frac{(1+\sqrt{-d})(a+b)}{s}\right|^2 
                            <  \epsilon,
  \end{align*}
  by \eqref{th4.3} and for some  $\epsilon_3, \epsilon^{'}_3, \epsilon_4, \epsilon'_4 \in [0, 1)$.    If $t\in C$, then $t/s \in R(C)$. Therefore, by \eqref{th4.2}, we conclude that $t\not\in C$, which implies $t\in D$.
    
  \smallskip

  Now, we calculate the distance between the elements of the form $\delta/t$ for any $\delta \in D_1$ and $\beta$ as follows. Let $\delta \in D_1$ be an arbitrary element and consider
  \begin{align*}
  & d\left(\frac{\delta}{t}, \beta\right)^2   = \frac{\left| \delta - \beta t \right|^2}{|t|^2} 
                           = \frac{\left| \delta-\beta (\alpha s \pm (\epsilon_{3} \pm \sqrt{-d}\epsilon'_3)a \pm (\epsilon _4 \pm\sqrt{-d}\epsilon'_4)b)\right|^2}{|t|^2} \\
                           & \leq  \frac{\left| \begin{aligned} 
                            \delta &\pm \beta((\epsilon_{3} \pm \sqrt{-d}\epsilon'_3)a \pm (\epsilon _4 \pm\sqrt{-d}\epsilon'_4)b)\\
                           &- \alpha \beta \left(\frac{\gamma}{\alpha\beta} \pm (\epsilon_1 \pm\sqrt{-d}\epsilon'_1)a \pm (\epsilon_2 \pm\sqrt{-d}\epsilon'_2)b\right)
                           \end{aligned}\right|^2}{|t|^2}\\
                           & \leq \frac{\left|\begin{aligned}
                            \pm \epsilon &a \pm \epsilon^{'}b+\beta( (\epsilon_{3} \pm \sqrt{-d}\epsilon'_3)a \pm (\epsilon_4 \pm\sqrt{-d}\epsilon'_4)b)\\
                           &+ \alpha\beta(\pm (\epsilon_1 \pm\sqrt{-d}\epsilon'_1)a \pm (\epsilon_2 \pm\sqrt{-d}\epsilon'_2)b)\end{aligned}\right|^2}{|t|^2}\\
                           & <  \epsilon
                             \end{align*}
                             by \eqref{th4.7}, \eqref{th4.8} and using the estimate
$$
      |t|^2 \geq \left|\alpha s\right|^2 \geq \left|\frac{\gamma}{\beta}\right|^2 > n_{0}
      $$  
      together with the inequality \eqref{th4.3}. Note that the above inequality is true for all $\delta \in D_1$. By \eqref{th4.6}, we know that $|D_1\cap D| \geq 1$ and hence there exists a $\delta \in D_1$ such that $\delta \in D$ also. For this $\delta$, we get $\displaystyle\frac{\delta}{t} \in B( \beta , \epsilon )\cap R(D)$, which is a contradiction.  Therefore, we conclude that $s\not\in C$ and hence $s\in D$. 
      \smallskip      
      Again, we write $ \beta s =\delta'_1 a+ \delta'_2 b $ 
for some $\delta'_1=x'_3 +\sqrt{-d}y'_3$, $\delta'_2= x'_4+\sqrt{-d} y'_4$ such that $x'_3, y'_3, x'_4, y'_4 \in \mathbb{R}$ and consider
$$
    t^{'} = (\langle x'_3\rangle + \sqrt{-d} \langle y'_3 \rangle)a + (\langle x'_4 \rangle + \sqrt{-d} \langle y'_4 \rangle)b,   
  $$
  where $\langle x \rangle$ is defined similarly as above.   Hence, 
  $$
  t' = \beta s \pm (\epsilon_{5} \pm \sqrt{-d}\epsilon'_5)a \pm (\epsilon _6 \pm\sqrt{-d}\epsilon'_6)b \in \mathfrak{a}
  $$  
  for some $ \epsilon_{5}, \epsilon^{'}_5, \epsilon_6, \epsilon'_6 \in [0, 1)$ and  we get 
  $$ |t'|^2  \geq |\beta s|^2 \geq \left| \frac{\gamma}{\alpha}\right|^2 > n_{0}
  $$ 
  by \eqref{th4.5}. 
  Again, by the similar arguments,  we can show that 
  $$ d\left(\frac{ t'}{s}, \beta\right)^2 < \epsilon 
  $$ 
  and conclude $t' \in C$ as $B( \beta, \epsilon) \cap R(D) = \emptyset $.
  
  \smallskip
  
  Now, we consider
  \begin{align*}
  & d\left(\frac{\gamma}{t'}, \alpha\right)^2\
   = \frac{\left|\gamma - \alpha(\beta s \pm (\epsilon_{5} \pm \sqrt{-d}\epsilon'_5)a \pm (\epsilon _6 \pm\sqrt{-d}\epsilon'_6)b)\right|^2}{|t'|^2}\\
                                 & = \frac{\left| \gamma - \alpha \beta s + \alpha(\pm (\epsilon_{5} \pm \sqrt{-d}\epsilon'_5)a \pm (\epsilon _6 \pm\sqrt{-d}\epsilon'_6)b)\right|^2}{|t'|^2}\\
                                 & = \frac{\left| \begin{aligned}
                                   \gamma - &\alpha \beta \left(\frac{\gamma}{\alpha\beta} \pm (\epsilon_1 \pm\sqrt{-d}\epsilon'_1)a \pm (\epsilon_2 \pm\sqrt{-d}\epsilon'_2)b\right)\\ 
                                 & + \alpha(\pm (\epsilon_{5} \pm \sqrt{-d}\epsilon'_5)a \pm (\epsilon _6 \pm\sqrt{-d}\epsilon'_6)b) \end{aligned}\right|^2}{|t'|^2} \\
                                 & =\frac{\left|\begin{aligned}  \alpha \beta(\pm & (\epsilon_1 \pm\sqrt{-d}\epsilon'_1)a \pm (\epsilon_2 \pm\sqrt{-d}\epsilon'_2)b)\\
                                 &+\alpha\left(\pm (\epsilon_{5} \pm \sqrt{-d}\epsilon'_5)a \pm (\epsilon _6 \pm\sqrt{-d}\epsilon'_6)b\right)\end{aligned}\right|^2}{|t'|^2}\\
                                 & < \epsilon
\end{align*}
by \eqref{th4.4} and the above estimate.  Thus, we get, 
$$\frac{\gamma}{t'} \in B(\alpha , \epsilon ) \cap R(C),
$$ 
which is a contradiction again.  This proves the Claim 1 and the theorem. 
 $\hfill\Box$

\section{Proof of Theorem \ref{Th5}}

We want to find a three-partition of the set $\mathfrak{a}$  such that none of which is fractionally dense in $\mathbb{C}$. If $\mathfrak{a}$ is finite, then there is nothing to prove. Now if $\mathfrak{a}$ is infinite, let us consider the sets
$$
A = \bigcup_{k=0}^\infty \left\{ a+ib : a+ib \in \mathfrak{a} \text{ and }  a^2 + b^2 \in [ 5^k, 2 \cdot 5^k) \right\},
$$
$$
B = \bigcup_{k=0}^\infty \left\{ a+ib : a+ib \in \mathfrak{a} \text{ and }  a^2 + b^2 \in [ 2 \cdot 5^k, 3 \cdot 5^k) \right\}
$$ 
and
 $$
 C = \bigcup_{k=0}^\infty\left \{ a+ib : a+ib \in \mathfrak{a} \text{ and }  a^2 + b^2 \in [3 \cdot 5^k, 5\cdot 5^k) \right\}.
 $$ 
It is easy to observe that $\mathfrak{a} = A \cup B \cup C \subseteq \mathcal{O}_K$ with $A\cap B=\emptyset$, $B\cap C = \emptyset$ and $C\cap A = \emptyset$.

\smallskip

\noindent{\bf Claim 1.} $C$ is not fractionally dense in $\mathbb{C}$.

\smallskip

First note that if $\frac{p}{q} \in R(C)$, then $\frac{p}{q}$ lies 
in an annulus of the form 
$$B_\ell = \left\{x+iy \in \mathbb{C} : \frac{3}{5} 5^\ell < x^2 + y^2 < \frac{5}{3}5^\ell \right\},
$$ for some integer $\ell$. 

\smallskip

For any integer $j < k$, the following inequality  
$\displaystyle \frac{5}{3}\cdot 5^j < \frac{3}{5} \cdot 5^k$
holds true. Hence  for all integers $  j < k $, we have, $B_{j} \cap B_{k} = \emptyset$. Thus, for any integer $\ell$, the set 
$$M=\{ x+iy  \in \mathbb{C} \ : \  \frac{5}{3}5^\ell< x^2 + y^2 < \frac{3}{5} 5^{\ell+1} \} \neq \emptyset
$$ 
and satisfies $M \cap R(C)=\emptyset$. Thus, $C$ is not fractionally dense in $\mathbb{C}$.

\smallskip

Similarly, one can prove  that neither $A$ nor $B$ is fractionally dense in $\mathbb{C}$. This completes the proof of the theorem. $\hfill\Box$

\section{Proof of Theorem \ref{Th6}}
When $d=1$ or $2$, it is well-known that $\mathbb{Z}[\sqrt{-d}]$ is the ring of integers of $\mathbb{Q}(\sqrt{-d})$ and it is principal ideal domain.
We shall construct an infinite set $A$ of prime elements in $\mathbb{Z}[\sqrt{-d}]$ which is not fractionally dense in $\mathbb{C}$.

\smallskip

By Lemma \ref{lem2}, there exists a smallest number $B > 1$  such that for every real number $x > 1$, one can find a prime ideal $\mathfrak{p}$ of $\mathbb{Z}[\sqrt{-d}]$ whose norm $N(\mathfrak{p}) \in [x, Bx]$. Since $\mathbb{Z}[\sqrt{-d}]$ is a principal ideal domain, every prime ideal $\mathfrak{p}$ is generated by a prime element, say, $\alpha_{\mathfrak{p}}$ and $N(\mathfrak{p}) = N(\alpha_{\mathfrak{p}}).$ Thus, we conclude that for every real number $x > 1$, there exists a prime element $\alpha \in \mathbb{Z}[\sqrt{-d}]$ whose norm $N(\alpha) \in [x, Bx]$. 

\smallskip

In other words, for each natural number $n> 1$, there exists a prime element $\alpha_n \in \mathbb{Z}[\sqrt{-d}]$ whose norm $N(\alpha_n) \in [B^{2n-1}, B^{2n}]$. Let $A$ be the subset of $\mathbb{Z}[\sqrt{-d}]$ which consists precisely of those $\alpha_{n}$'s.  Clearly the set $A$ is infinite. 

\smallskip

\noindent{\bf Claim:}  $A$ is not fractionally dense in $\mathbb{C}$.

\smallskip

Let $1 < m < n$ be any given integers. Then by the above argument, we know that $N(\alpha_m) \in [B^{2m-1}, B^{2m}]$ and $N(\alpha_n) \in [B^{2n-1}, B^{2n}]$. 
Therefore, we get
$$
N(\alpha_m) \leq B^{2m} \leq B^{2(n-1)} < B^{2n-1} \leq N(\alpha_n).
$$
Hence, we get
$$
\frac{N(\alpha_m)}{N(\alpha_n)} < \frac{1}{B} \mbox{ and } \frac{N(\alpha_n)}{N(\alpha_m)} > B.
$$
Thus, if we consider the annulus 
$$
AN= \left\{z \in \mathbb{C}  \ : \  \sqrt{\frac{1}{B}} < |z| < \sqrt{B}\right\},
$$ 
then,  no element of  $R(A)$ lies inside $AN$. 
%Therefore, $A$ cannot be fractionally dense in $\mathbb{C}$. 
$\hfill\Box$

\section{Concluding Remarks} 

\begin{itemize}
\item[1.] 
In \cite{gems}, a necessary and sufficient condition for the set of positive integers whose $b$-ary expansion begins with the digit $1$ to be fractionally dense in $\mathbb{R}_{>0}$ was proved. In Corollary \ref{Cor11}, we have provided a sufficient condition for the set of integers whose $b$-ary expansion begins with the digit   $a$ to be  fractionally dense in $\mathbb{R}_{>0}$. 

\smallskip

A natural question is whether the converse is true in Corollary \ref{Cor11}. More generally, one may ask the truth of the  following statement.

%\begin{quest}
\noindent{\bf Question.} {\it If the set $B$ in Theorem} \ref{Th1} {\it is fractionally dense in $\mathbb{R}_{>0}$, then is $b\leq c^2/a$?} 
%\end{quest}

\item[2.]
Let $K$ be an algebraic number field not contained in $\mathbb{R}$ such that $[K:\mathbb{Q}] \geq 3$. Then it is well known that $\mathcal{O}_K$ is dense in $\mathbb{C}$. Now let $\mathcal{O}_K = A \cup B$ be a partition  of $\mathcal{O}_K$ with at least one of them (say $A$) finite. Then $B$ is dense in $\mathbb{C}$ and hence $B$ is fractionally dense in $\mathbb{C}$. Now if both $A$ and $B$ are infinite, a natural question is whether at least one of $A$ or $B$ is fractionally dense or not.

\smallskip 

More precisely, we may ask the following: 

%\begin{quest}
\noindent{\bf Question.} {\it Let $\mathcal{O}_K = A \cup B$ be a partition of $\mathcal{O}_K$ such that both $A$ and $B$ are infinite, then is at least one of $R(A)$ or $R(B)$ dense in $\mathbb{C}$? If not, then find two infinite sets $A$ and $B$ such that none of them is fractionally dense in $\mathbb{C}$ and $\mathcal{O}_K = A \cup B$.}
%\end{quest} 
\end{itemize}

\bigskip

\noindent{\bf Acknowledgements.} We express our sincere thanks to Professors J. Bukor, C. Sanna and  O. Strauch,  for pointing out some of the important references in the literature. Also, we thank Professor R. Thangadurai for carefully going through the earlier version of this manuscript. We are grateful to the referee for his/her valuable comments that greatly improved the presentation of the paper. We acknowledge our Institute for providing us the necessary and excellent facilities to carry out this research.

\end{document}